\newtheorem{theorem}{Theorem}[section]
\newtheorem{lemma}{Lemma}[section]
\newtheorem{corollary}{Corollary}[section]
\theoremstyle{remark}\newtheorem{example}{Example}[section]
\theoremstyle{remark}\newtheorem{remark}{Remark}[section]
\begin{document}
\title[On a subclass of close-to-convex harmonic mappings]{on a subclass of close-to-convex harmonic mappings}

\author[Manivannan Mathi and Jugal Kishore Prajapat]{Manivannan Mathi and Jugal Kishore Prajapat}

\address{Manivannan Mathi, Department of Mathematics, Central University of Rajasthan, Bandarsindri, Kishangarh-305817, Dist.-Ajmer, Rajasthan, India.}
\email{manivannan.mathi91@gmail.com, jkprajapat@gmail.com}

\begin{abstract}
 For $\alpha > -1$ and $\beta >0, $ let $\mathcal{B}_{\mathcal{H}}^0(\alpha, \beta)$ denote the class of sense preserving harmonic mappings $f=h+\overline{g}$ in the open unit disk $\mathbb{D}$ satisfying $|zh''(z)+\alpha(h'(z)-1)|\leq \beta-|zg''(z)+\alpha g'(z)|.$ First, we establish that each function belonging to this class is close-to-convex in the open unit disk if $\beta \in (0, 1+\alpha]$. Next, we obtain coefficient bounds, growth estimates and convolution properties. We end the paper with applications and will construct harmonic univalent polynomials belonging to this class.
\end{abstract}

\subjclass[2010]{30C45, 30C55, 31A05}
\keywords{Harmonic mappings, Close-to-convex functions, Subordinations, Coefficient estimates, Convolution.}

\maketitle
\section{Introduction}
\setcounter{equation}{0}

        A complex valued mapping  $f=u+iv$ in domain  $\Omega$ is a planar  harmonic mapping,  if both $u$ and $v$ are real-valued  harmonic functions in  $\Omega$. If the  domain $\Omega$  is simply connected and $z_0\in \Omega,$ then the $f$ admits a unique canonical representation $f=h+\overline{g},$ where both $h$ and $g$ are analytic in $\Omega$ and $g(z_0)=0.$ The harmonic mapping  $f$ is locally univalent in $\Omega$ if and only if its Jacobian  $J_f(z) = |f_z(z)|^2-|f_{\overline{z}}(z)|^2$  is non-zero in $\Omega$ (see \cite{lewy}). It is sense preserving, if $J_f(z)>0 \;(z \in \Omega),$ or equivalently, if $h'(z) \neq 0$ and the dilatation $w=g' /h'$ is analytic and satisfies $|w|<1$ in $\Omega.$
        
        We denote by  $\mathcal{H},$ the class of all harmonic  mappings $f=h+\overline{g}$ in the  open unit disk $\mathbb{D}=\{z \in\mathbb{C}: |z|<1\}$ that are normalized by $h(0)=g(0)=h'(0)-1=0$. Each function $f$ in $\mathcal{H}$ can be expressed as $f=h+\overline{g},$ where $h$ and $g$ are analytic functions in $\mathbb{D},$ and have power series representations 
        \begin{equation}\label{intro1}
        h(z) = z + \sum_{n=2}^{\infty} a_nz^n  \quad {\rm and } \quad g(z)=\sum_{n=1}^{\infty} b_nz^n.
        \end{equation}
        Let $\mathcal{S}_\mathcal{H}$ be the subclass of $\mathcal{H}$ consisting  of univalent and sense-preserving \linebreak harmonic mappings in $\mathbb{D}.$ Also, we denote by  $\mathcal{H}^0=\left\lbrace  f\in \mathcal{H}: f_{\overline{z}}(0)=0 \right\rbrace $ and  \linebreak $\mathcal{S}_\mathcal{H}^0=\left\lbrace  f\in \mathcal{S} _\mathcal{H}: f_{\overline{z}}(0)=0 \right\rbrace.$ In 1984, Clunie and Sheil-Small \cite{clunie} investigated the class  $\mathcal{S}_\mathcal{H}$ together with some  of its geometric subclasses. For recent results in harmonic  mappings, we refer to \cite{bshouty1, graf, kanas, zliu, prajap, prajapat} and the references therein.
        
        In \cite{herandez}, Hern\'andez and Martin introduced the concept of stable harmonic mappings. A sense preserving harmonic mapping $f=h+\overline{g}$ is said to be stable harmonic univalent (resp. stable harmonic convex, stable harmonic starlike, or stable harmonic close-to-convex) in $\mathbb{D}$, if all functions $F_\lambda=h+\lambda g$ with $|\lambda|=1$  are univalent (resp. convex, starlike, or close-to-convex) in $\mathbb{D}.$ They proved that for $|\lambda|=1,$ the functions $f_{\lambda}=h+\lambda \overline{g}$ are univalent (resp. convex, starlike, or close-to-convex) for all such $\lambda$ \,(see \cite{herandez}). We recall that, a function $f\in\mathcal{H}$  is said to be close-to-convex, if $f(\mathbb{D})$ is a close-to-convexity, {i.e.,} the complement of $f(\mathbb{D})$ can be written as disjoint union of non-intersecting half lines. The following sufficient condition for close-to-convexity of harmonic mappings is due to Clunie and Sheil-Small \cite{clunie}. 
        
        \begin{lemma}\label{lm.6}
        	If harmonic mapping $f=h+\overline{g}:\mathbb{D}\rightarrow\mathbb{C}$ satisfies $|g'(0)|<|h'(0)|$ and the function  $F_\lambda=h+\lambda g$ is close-to-convex for every $|\lambda|=1,$ then $f$ is close-to-convex function.
        \end{lemma}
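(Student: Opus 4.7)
The plan is to verify, in turn, that $f$ is (i) sense-preserving, (ii) univalent, and (iii) maps $\mathbb{D}$ onto a close-to-convex domain, invoking a different piece of the hypothesis at each stage.

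For (i), each $F_\lambda$ is close-to-convex and therefore locally univalent, so $F_\lambda'(z)=h'(z)+\lambda g'(z)$ never vanishes on $\mathbb{D}$. Letting $\lambda$ range over the unit circle, the locus $\{h'(z)+\lambda g'(z):|\lambda|=1\}$ is a circle of radius $|g'(z)|$ centred at $h'(z)$ (degenerating to the single point $h'(z)$ when $g'(z)=0$), and the fact that it avoids the origin is equivalent to $|h'(z)|\neq|g'(z)|$. Hence the continuous function $|h'|^{2}-|g'|^{2}$ is nowhere zero on the connected domain $\mathbb{D}$; by the hypothesis $|g'(0)|<|h'(0)|$ it is positive at the origin, and hence positive throughout $\mathbb{D}$. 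In particular, $h'$ has no zeros, the dilatation $w=g'/h'$ is analytic with $|w|<1$, and $f$ is sense-preserving.

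For (ii), I would suppose $f(z_1)=f(z_2)$ with $z_1\neq z_2$ and set $A=h(z_1)-h(z_2)$, $B=g(z_1)-g(z_2)$, so $A+\overline{B}=0$. If $B=0$, then also $A=0$ and $F_1=h+g$ fails to be injective on $\{z_1,z_2\}$, contradicting its univalence. Otherwise $\lambda_0:=\overline{B}/B$ is unimodular and $F_{\lambda_0}(z_1)-F_{\lambda_0}(z_2)=A+\lambda_0 B=-\overline{B}+\overline{B}=0$, again a contradiction. Hence $f$ is univalent.

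For (iii), I would argue that $\mathbb{C}\setminus f(\mathbb{D})$ is a disjoint union of non-crossing half-lines by transferring the analogous property, available by hypothesis for every $\mathbb{C}\setminus F_\lambda(\mathbb{D})$ with $|\lambda|=1$, to the harmonic map $f$. This is the main obstacle and constitutes the geometric core of the Clunie--Sheil-Small shearing theorem. A natural route is via a Kaplan-type boundary-rotation criterion for harmonic mappings: close-to-convexity of each $F_\lambda$ gives, uniformly in $\lambda$, a lower bound of $-\pi$ on the boundary rotation of the curves $\theta\mapsto F_\lambda(re^{i\theta})$ over every arc on $|z|=r<1$; this uniformity in $\lambda$, combined with the pointwise realisation $\overline{g(re^{i\theta})}=\lambda(\theta)\,g(re^{i\theta})$ for the unimodular choice $\lambda(\theta):=\overline{g(re^{i\theta})}/g(re^{i\theta})$ (together with a limiting argument at zeros of $g$), should allow the corresponding rotation bound to be deduced for $f(re^{i\theta})$, and letting $r\to 1^{-}$ then produces the required half-line structure for $f(\mathbb{D})$. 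Steps (i) and (ii) are essentially algebraic, whereas (iii) demands this delicate boundary analysis, which is why the hypothesis is that every $F_\lambda$ is close-to-convex rather than merely univalent.
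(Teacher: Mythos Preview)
The paper does not prove Lemma~\ref{lm.6}; it is simply quoted from Clunie and Sheil-Small \cite{clunie} as a known sufficient condition, so there is no ``paper's own proof'' to compare against.

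On the substance of your attempt: steps (i) and (ii) are correct and cleanly argued. Step (iii), however, is only a sketch, and the crucial passage contains a genuine gap. The Kaplan-type rotation bound for a \emph{fixed} $\lambda$ reads
\[
\int_{\theta_1}^{\theta_2}\frac{\partial}{\partial\theta}\arg\!\Bigl(\tfrac{\partial}{\partial\theta}F_\lambda(re^{i\theta})\Bigr)\,d\theta>-\pi,
\]
and this is an integral condition over an arc, not a pointwise one. Your ``pointwise realisation'' $\overline{g(re^{i\theta})}=\lambda(\theta)\,g(re^{i\theta})$ replaces the constant $\lambda$ by a $\theta$-dependent unimodular function, so the hypothesis on $F_{\lambda(\theta)}$ no longer applies to the resulting integral; one cannot simply splice together the inequalities for varying $\lambda$. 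The actual Clunie--Sheil-Small argument compares the tangent directions of $f(re^{i\theta})$ and of a single $F_\lambda(re^{i\theta})$ (with $\lambda$ chosen once, depending on the arc) and bounds their difference uniformly, rather than matching $\lambda$ point by point. As written, your step (iii) does not close, though your instinct that the boundary-rotation criterion is the right vehicle is sound.
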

        
        An analytic function $\varphi$ is said to  subordinate to the analytic function $\psi$ and written by $\varphi(z)\prec \psi(z)$, if there exists a function $w$ analytic in $\mathbb{D}$ with $w(0)=0$, and $|w(z)|<1$ for all $z\in\mathbb{D}$, such that $\varphi(z)=\psi(w(z))$, $z\in\mathbb{D}$. Furthermore, if the function $\psi$ is univalent in $\mathbb{D}$, then we have the following equivalence:
        $$ \varphi(z)\prec \psi(z)\Longleftrightarrow \;[\varphi(0)=\psi(0)\quad\text{and}\quad \varphi(\mathbb{D})\subset \psi(\mathbb{D})].$$
        In this article, we shall use the following known result of subordination.
        
        \begin{lemma} (see \cite[Ponnusamy Eq. 16]{samy3}). \label{lm.4}  
        	Let $\mathcal{P}$ be an analytic function such that $\mathcal{P}(0)=1.$ Then for real $ \alpha$ such that $\alpha > -1,$ we have 
        	\[ \mathcal{P}(z)+\alpha z \mathcal{P}'(z) \prec 1+\lambda z \Rightarrow  \mathcal{P}(z) \prec  1+\frac{\lambda}{\alpha+1}z, \qquad \qquad  z\in\mathbb{D}. \]
        \end{lemma}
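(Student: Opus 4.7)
I would recognize the hypothesis as a first--order linear differential subordination and take $q(z):=1+\frac{\lambda}{\alpha+1}z$ as the natural candidate dominant, since it is the unique analytic solution with $q(0)=1$ of the ODE $q(z)+\alpha z q'(z)=1+\lambda z$, i.e.\ the case in which the Schwarz function hidden in the subordination is the identity. Because $q$ is affine and hence univalent with $q(0)=1=\mathcal{P}(0)$, proving $\mathcal{P}\prec q$ reduces to verifying the inclusion $\mathcal{P}(\mathbb{D})\subset q(\mathbb{D})$, that is, $|\mathcal{P}(z)-1|<|\lambda|/(\alpha+1)$ for every $z\in\mathbb{D}$.

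First I would use the subordination hypothesis to produce an analytic $w:\mathbb{D}\to\mathbb{D}$ with $w(0)=0$ such that $\mathcal{P}(z)+\alpha z\mathcal{P}'(z)=1+\lambda w(z)$. Treating this as a first--order linear ODE in $\mathcal{P}$ and, for $\alpha>0$, multiplying by the integrating factor $z^{1/\alpha-1}/\alpha$, the left--hand side becomes $(z^{1/\alpha}\mathcal{P}(z))'$. Integrating from $0$ to $z$ and substituting $t=sz$ yields the representation
$$\mathcal{P}(z)=1+\frac{\lambda}{\alpha}\int_0^1 s^{1/\alpha-1}\,w(sz)\,ds.$$
Invoking Schwarz's lemma in the form $|w(sz)|\leq s|z|$ inside the integral then gives
$$|\mathcal{P}(z)-1|\;\leq\;\frac{|\lambda|\,|z|}{\alpha}\int_0^1 s^{1/\alpha}\,ds\;=\;\frac{|\lambda|\,|z|}{\alpha+1}\;<\;\frac{|\lambda|}{\alpha+1},$$
which establishes the required containment, and hence the subordination, in the case $\alpha>0$; the case $\alpha=0$ is trivial since the hypothesis and conclusion then coincide.

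The main obstacle is the remaining range $-1<\alpha<0$, where the integrating factor $z^{1/\alpha}$ has a singularity at the origin and the integral representation above no longer converges in the naive sense. The cleanest remedy is to bypass the explicit ODE solution and appeal to the Miller--Mocanu / Hallenbeck--Ruscheweyh theory of differential subordinations: the dominant $\phi(z)=1+\lambda z$ is convex univalent, and the admissibility condition $\mathrm{Re}(1+\alpha)>0$ (which is exactly $\alpha>-1$) ensures that the linear operator $\mathcal{P}\mapsto \mathcal{P}+\alpha z\mathcal{P}'$ has $q$ as its best dominant. This unified route handles the full parameter range $\alpha>-1$ and recovers the explicit constant $|\lambda|/(\alpha+1)$ verified above.
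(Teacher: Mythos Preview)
The paper does not supply its own proof of this lemma; it is quoted from \cite{samy3}, so there is no argument in the paper to compare yours against. Your integrating-factor solution for $\alpha>0$, combined with the Schwarz bound $|w(sz)|\le s|z|$ inside the integral, is correct and clean, and the case $\alpha=0$ is indeed trivial.

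The genuine gap is the range $-1<\alpha<0$. Your appeal to the Hallenbeck--Ruscheweyh / Miller--Mocanu machinery misidentifies the relevant hypothesis: for the linear subordination $p+\alpha zp'\prec h$ with $h$ convex, the classical theorem is stated with parameter $\gamma=1/\alpha$ and requires $\Re\gamma\ge 0$, not the condition ``$\Re(1+\alpha)>0$'' that you invoke. When $-1<\alpha<0$ one has $\gamma<0$ and the theorem does not apply. Worse, the assertion is actually \emph{false} at $\alpha=-1/n$ for each integer $n\ge 2$: with $\alpha=-\tfrac12$ and $\mathcal{P}(z)=1+Cz^{2}$ one computes
\[
\mathcal{P}(z)+\alpha z\,\mathcal{P}'(z)=1+Cz^{2}-Cz^{2}=1\prec 1+\lambda z
\]
for every $C\in\mathbb{C}$, whereas the claimed conclusion $\mathcal{P}\prec 1+2\lambda z$ would force $|C|\le 2|\lambda|$. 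Hence no argument can rescue that parameter range; the lemma as printed should really carry the hypothesis $\alpha\ge 0$, and your valid portion already establishes that case completely.
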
     
        
        For two analytic functions  $\psi_1$ and $\psi_2$ in $\mathbb{D},$  given by $\psi_1(z)=\sum_{n=0}^{\infty}a_nz^n$ and $\psi_2(z)=\sum_{n=0}^{\infty}b_nz^n,$ the {\it  convolution (or Hardamard product)} is defined by   $ \left( \psi_1\ast \psi_2 \right) (z)= \sum_{n=0}^{\infty} a_nb_nz^n,$ $z\in\mathbb{D}. $  Analogously, for harmonic functions $f_1=h_1+\overline{g_1}$ and $f_2=h_2+\overline{g_2}$ in $\mathcal{H},$ the  convolution of $f_1$ and $f_2$ is defined as $\; f_1 \ast f_2 = h_1\ast h_2+\overline{g_1\ast g_2}.$\; Clunie and Sheil-Small \cite{clunie} proved  that, if $f$ is harmonic convex function, and $\phi$ is an analytic convex function, then   $f\ast\left(\phi+\alpha \overline{\phi} \right) $ 
        is a harmonic close-to-convex function for all $\alpha$ such that $|\alpha|<1.$ Clearly the space $\mathcal{H}$ is closed under the convolution, {\it i.e.} $\mathcal{H}\ast \mathcal{H}\subset \mathcal{H}.$  We refer \cite{droff1, kumar, li,zliu1} for more information concerning convolution of harmonic mappings and in the case of analytic functions, we refer for examples \cite{samy41,st} and the references therein.
        
        Let $\mathcal{A}$ denote the class of analytic functions in the unit disk $\mathbb{D}$ and are normalized by $f(0)=f'(0)-1=0.$ We also denote by $\mathcal{S},$ the subclass  of $\mathcal{A}$ consisting of univalent functions. Let $\mathcal{S}^*$ and $\mathcal{K}$ denote the subclasses of $\mathcal{A}$, which consists the starlike and convex functions, respectively. A function $f \in \mathcal{A}$ is close-to-convex in $\mathbb{D},$ if there exists a convex analytic function $\phi$ in $\mathbb{D},$ not necessarily normalized, such that $\Re \left(f'(z) /\phi'(z)\right)>0 $ in $\mathbb{D}$. Ponnusamy and Singh  \cite{samy1} have studied a subclass $\mathcal{B}(\alpha,\beta)$ of close-to-convex functions $f\in \mathcal{A}$ which satisfy the condition 
        \[ |zf''(z)+\alpha(f'(z)-1)|<\beta, \qquad z\in\mathbb{D},\]
        where $\alpha>-1$ and $\beta> 0.$ Also, they  proved that  functions  in the  class $\mathcal{B}(\alpha, \beta)$ are convex in $\mathbb{D},$  if $\alpha>-1$ and
        \begin{equation}\label{intro.3}
        0<\beta \leq \left\{\begin{array}{ll}
        \dfrac{1-\alpha}{2+\alpha}  \qquad  {\rm for }  \qquad  & -1< \alpha \leq \sqrt{5}-2,\\
        \dfrac{1+\alpha}{\sqrt{5}}  \qquad  {\rm for }  \qquad  &\sqrt{5} -2\leq \alpha \leq 1,\\
        \dfrac{1+\alpha}{\alpha\sqrt{5}}  \qquad  {\rm for } \qquad  & 1\leq \alpha \leq \dfrac{2}{\sqrt{5}-1},\\
        \dfrac{1+\alpha}{2+\alpha}  \qquad  {\rm for }  \qquad  &\dfrac{2}{\sqrt{5}-1}\leq \alpha \leq 2,\\
        \dfrac{1+\alpha}{2\alpha}  \qquad   {\rm for }  \qquad  &  \alpha \geq 2,   \end{array} \right. 
        \end{equation}
        (see \cite[Corollary 4]{samy1}); and stralike in $\mathbb{D},$ if $\alpha>-1$ and
        \begin{equation}\label{intro.4}
        0<\beta \leq \left\{\begin{array}{ll}
        \dfrac{2(1+\alpha)}{2+\alpha^{2/(1-\alpha)}}  \qquad  {\rm for } \qquad  & -1< \alpha \neq 1 < \infty, \\
        \dfrac{4e^2}{1+e^2} \quad \quad  \qquad  \;\;\; {\rm for }  \qquad  &  \alpha =1,\\
        \end{array} \right.
        \end{equation}
        (see \cite[Theorem 1.14]{samy1}). Further, we deduce the conditions for the univalency of functions in the class $\mathcal{B}(\alpha, \beta)$ by taking    $p(z)=f'(z)-1, \;k=1/\alpha \;(\alpha > -1)$ and $J =\beta/ |\alpha|$ in \cite[Ponnusamy Eq. 16]{samy3}. This provides, if $f \in \mathcal{A}$ and $|zf''(z)+\alpha (f'(z)-1)|< \beta \;(z \in \mathbb{D}),$ then $|f'(z)-1| <\beta /(1+\alpha) \;(z \in \mathbb{D}). $ Therefore, the functions in the $\mathcal{B}(\alpha, \beta)$  are close-to-convex (hence univalent) in $\mathbb{D}$, if $\alpha >-1$  and  $\beta \in (0, 1+\alpha].$
        
        \medskip
        Now we define harmonic analogue of the class $\mathcal{B}(\alpha, \beta)$. For $\alpha > -1$ and $\beta >0,$ let $\mathcal{B}_{\mathcal{H}}^0(\alpha, \beta)$ be a subclass of $\mathcal{H}^0$ which is defined  by
        \begin{align*}
        	& \mathcal{B}_{\mathcal{H}}^0(\alpha, \beta) \\
        	&= \left\lbrace f=h+\overline{g}\in \mathcal{H}^0:  |zh''(z)+\alpha(h'(z)-1)|\leq \beta -|zg''(z)+\alpha g'(z)|, \;z\in \mathbb{D}\right\rbrace.
        \end{align*}
        
        Note that, for $\alpha=0,$ the class $\mathcal{B}_{\mathcal{H}}^0(\alpha, \beta)$  reduces to the class $\mathcal{B}_{\mathcal{H}}^0(\beta),$ which was studied recently by Ghosh and Vasudevaro \cite{ghosh}. Further $\mathcal{B}_{\mathcal{H}}^0(\alpha, \beta)$ reduces to $\mathcal{B}(\alpha, \beta)$, if the co-analytic part of $f$ in $\mathcal{B}_{\mathcal{H}}^0(\alpha, \beta)$ is zero.
        
        In this article, we prove that the functions in $\mathcal{B}_{\mathcal{H}}^0(\alpha, \beta)$  are close-to-convex in $\mathbb{D}.$ We also prove that functions in $\mathcal{B}_{\mathcal{H}}^0(\alpha, \beta)$ are stable harmonic univalent, stable harmonic convex and stable harmonic starlike in $\mathbb{D}$ for different values of its parameters. Further, the coefficient estimates, growth results, area theorem, boundary behaviour, convolution and convex combination properties of the class $\mathcal{B}_{\mathcal{H}}^0(\alpha, \beta)$  of harmonic mapping are obtained. Finally, we consider the harmonic mappings which involve  hypergeometric functions and obtain conditions on its parameters such that it belongs to the class $\mathcal{B}_{\mathcal{H}}^0(\alpha, \beta)$.
         \section{Main Results}
 The first  result provides a one-to-one correspondence between the classes  $\mathcal{B}_{\mathcal{H}}^0(\alpha, \beta)$ and $\mathcal{B} (\alpha, \beta).$
 
 \begin{theorem} \label{2.1}
 	For $\alpha > -1$ and   $\beta> 0$, the harmonic mapping $f=h+\overline{g} \in \mathcal{B}_{\mathcal{H}}^0(\alpha, \beta)$ if and only if $F_\lambda=h+\lambda g \in \mathcal{B}(\alpha, \beta)$ for all $\lambda (|\lambda|=1|).$
 \end{theorem}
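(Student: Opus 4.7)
The plan is to reduce the biconditional to a direct algebraic manipulation plus a standard supremum trick over $|\lambda|=1$. The core observation is that for $f=h+\overline{g}\in\mathcal{H}^0$ (so in particular $g'(0)=0$), the function $F_\lambda=h+\lambda g$ automatically satisfies $F_\lambda(0)=0$ and $F_\lambda'(0)=1$, hence $F_\lambda\in\mathcal{A}$. A direct differentiation then gives
\[
zF_\lambda''(z)+\alpha\bigl(F_\lambda'(z)-1\bigr)
= \bigl[zh''(z)+\alpha(h'(z)-1)\bigr] + \lambda\bigl[zg''(z)+\alpha g'(z)\bigr].
\]
Writing $A(z):=zh''(z)+\alpha(h'(z)-1)$ and $B(z):=zg''(z)+\alpha g'(z)$, the membership condition for $F_\lambda$ in $\mathcal{B}(\alpha,\beta)$ becomes $|A(z)+\lambda B(z)|<\beta$ for all $z\in\mathbb{D}$.

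For the forward implication, I would assume $|A(z)|+|B(z)|\leq\beta$ and apply the triangle inequality together with $|\lambda|=1$ to deduce $|A(z)+\lambda B(z)|\leq |A(z)|+|B(z)|\leq\beta$, so that $F_\lambda\in\mathcal{B}(\alpha,\beta)$ for every such $\lambda$.

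For the reverse implication, the key step is the pointwise optimisation: for each fixed $z\in\mathbb{D}$, I choose a unimodular $\lambda=\lambda(z)$ so that the two summands are aligned, namely $\lambda(z)=\overline{B(z)}\,A(z)/(|A(z)||B(z)|)$ when both $A(z),B(z)$ are nonzero (and any unimodular $\lambda$ otherwise). Then $|A(z)+\lambda(z) B(z)|=|A(z)|+|B(z)|$. Since the hypothesis $F_\lambda\in\mathcal{B}(\alpha,\beta)$ is assumed for \emph{every} unimodular $\lambda$ (hence in particular for this specific $\lambda(z)$), we obtain $|A(z)|+|B(z)|\leq\beta$, which is precisely the defining condition for $\mathcal{B}_{\mathcal{H}}^0(\alpha,\beta)$.

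There is essentially no obstacle here; the only point worth stating carefully is the mild mismatch between the strict inequality in the definition of $\mathcal{B}(\alpha,\beta)$ and the non-strict one in the definition of $\mathcal{B}_{\mathcal{H}}^0(\alpha,\beta)$. Because $\lambda(z)$ may depend on $z$ while the hypothesis gives a uniform bound for each fixed $\lambda$, I would handle this by noting that the pointwise selection above still yields $|A(z)|+|B(z)|\leq\beta$ for every $z$, matching the non-strict form in the class definition exactly. No further analytic machinery (such as Lemma \ref{lm.4} or Lemma \ref{lm.6}) is required for this theorem; the content lies entirely in the linearity of the operator $L[\phi](z)=z\phi''(z)+\alpha\phi'(z)$ in the co-analytic coefficient and in the extremal alignment of two complex numbers under unimodular rotation.
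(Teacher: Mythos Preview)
Your proof is correct and follows essentially the same route as the paper: the forward direction is the triangle inequality applied to $A(z)+\lambda B(z)$, and the reverse direction is the paper's ``appropriate choice of $\lambda$,'' which you have simply made explicit by writing out the phase-aligning unimodular factor. No substantive difference in strategy or tools.
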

 
 \begin{proof}
 	
 	We follow the method of proof of \cite{yama}, for example, let $f=h+\overline{g} \in \mathcal{B}_{\mathcal{H}}^0(\alpha, \beta).$ Then for all $\lambda (|\lambda|=1|),$  we have 
 	\begin{align*}
 		\left| z  F_\lambda''(z)+\alpha\left(  F_\lambda'(z)-1 \right) \right|&= \left|z (h+\lambda g)''(z)+\alpha((h+\lambda g)'(z)-1) \right| \\
 		&\leq \left|z h''(z)+\alpha(h'(z)-1)\right|+ \left| zg''(z)+\alpha g'(z) \right| \\
 	&\leq  \beta,
 	\end{align*}
 	and hence $F_\lambda \in \mathcal{B}(\alpha, \beta).$ Conversely, for all $\lambda (|\lambda|=1)$, let $F_\lambda=h+\lambda g \in \mathcal{B}(\alpha, \beta)$. Then  
 	\begin{align*} 
 		\left| z  F_\lambda''(z)+\alpha\left(F_\lambda'(z)-1 \right) \right| & = \left|z h''(z)+\alpha(h'(z)-1) + \lambda (zg''(z)+\alpha g'(z)) \right|   \\
 		\notag  &< \beta, \qquad z \in \mathbb{D}.
 	\end{align*}  
 	For an appropriate choice of $\lambda,$ we obtain from the last inequality
 	\[\left|z h''(z)+\alpha(h'(z)-1) \right| + \left| zg''(z)+\alpha g'(z) \right| < \beta, \qquad  z\in \mathbb{D},  \]       
 	and hence $f\in \mathcal{B}_{\mathcal{H}}^0(\alpha, \beta).$
 \end{proof}    

 \begin{remark} \label{rm1}
 	We observe that functions in $\mathcal{B}_{\mathcal{H}}^0(\alpha, \beta)$ are stable harmonic close-to-convex if $\alpha>-1$ and $\beta \in (0, 1+\alpha]$, stable harmonic convex in $\mathbb{D}$ if $\alpha >-1$ and $\beta$ satisfies the conditions (\ref{intro.3}), stable harmonic starlike in $\mathbb{D}$ if $\alpha >-1$ and $\beta$ satisfies the conditions (\ref{intro.4}). Further, the following result shows that functions in $\mathcal{B}_{\mathcal{H}}^0(\alpha, \beta)$.
 \end{remark}
 
 \begin{theorem} \label{newthm}
 	For $\alpha >-1$ and $\beta  \in (0, 1+\alpha],$ the harmonic mappings in $\mathcal{B}_{\mathcal{H}}^0(\alpha, \beta)$ are close-to-convex in $\mathbb{D}.$
 \end{theorem}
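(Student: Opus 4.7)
The plan is to combine Theorem~\ref{2.1} (the one-to-one correspondence just established) with the close-to-convexity criterion for the analytic class $\mathcal{B}(\alpha,\beta)$ recalled in the introduction, and then lift the conclusion to the harmonic setting via Lemma~\ref{lm.6} of Clunie and Sheil-Small.

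First I would fix an arbitrary $f = h + \overline{g} \in \mathcal{B}_{\mathcal{H}}^0(\alpha,\beta)$. By Theorem~\ref{2.1}, every $F_\lambda = h + \lambda g$ with $|\lambda| = 1$ belongs to the analytic class $\mathcal{B}(\alpha,\beta)$. Next, I would invoke the observation already recorded in the introduction (the deduction via Lemma~\ref{lm.4} with $p(z) = F_\lambda'(z) - 1$, $k = 1/\alpha$, $J = \beta/|\alpha|$): under the hypothesis $\beta \in (0, 1+\alpha]$, any member $\varphi$ of $\mathcal{B}(\alpha,\beta)$ satisfies $|\varphi'(z) - 1| < \beta/(1+\alpha) \leq 1$ in $\mathbb{D}$, so that $\Re\, \varphi'(z) > 0$ throughout $\mathbb{D}$. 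Since the identity $\phi(z) = z$ is a (trivially) convex analytic function, this yields $\Re(F_\lambda'(z)/\phi'(z)) > 0$, i.e., $F_\lambda$ is close-to-convex in $\mathbb{D}$ for every unimodular $\lambda$.

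Finally, I would verify the normalization hypothesis of Lemma~\ref{lm.6}. Because $f \in \mathcal{H}^0$, we have $g'(0) = 0$, while $h'(0) = 1$; hence $|g'(0)| = 0 < 1 = |h'(0)|$. Thus both hypotheses of Lemma~\ref{lm.6} are met, and the lemma lets me conclude that $f = h + \overline{g}$ is close-to-convex in $\mathbb{D}$.

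Since the three ingredients (Theorem~\ref{2.1}, the Ponnusamy–Singh subordination deduction, and Lemma~\ref{lm.6}) are already in hand, no genuine obstacle arises; the proof is essentially a two-line chain of implications. The only point requiring a moment of care is making sure that the bound $\beta \leq 1+\alpha$ is precisely what is needed to force $\Re F_\lambda'(z) > 0$, which is the mild analytic content that makes the universal choice of convex comparison function $\phi(z)=z$ work uniformly in $\lambda$.
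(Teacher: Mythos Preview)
Your proposal is correct and follows essentially the same approach as the paper: apply Theorem~\ref{2.1} to get $F_\lambda\in\mathcal{B}(\alpha,\beta)$, use the introduction's subordination argument to conclude each $F_\lambda$ is close-to-convex, and finish with Lemma~\ref{lm.6}. The only difference is that you spell out explicitly why $\Re F_\lambda'(z)>0$ and why $|g'(0)|<|h'(0)|$, whereas the paper simply cites the close-to-convexity of $\mathcal{B}(\alpha,\beta)$ from the introduction and invokes Lemma~\ref{lm.6} without restating its hypothesis.
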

 \begin{proof}
 	If $ f \in \mathcal{B}_{\mathcal{H}}^0(\alpha, \beta)$, then by theorem \ref{2.1}, we have  $F_\lambda=h+\lambda g \in \mathcal{B}(\alpha, \beta)$ for all $\lambda(|\lambda|=1).$ Hence, $F_\lambda$ are close-to-convex in $\mathbb{D}$ for $\alpha >-1$ and $\beta  \in (0, 1+\alpha].$ Now using Lemma \ref{lm.6}, we conclude that functions in $\mathcal{B}_{\mathcal{H}}^0(\alpha, \beta)$ are close-to-convex in $\mathbb{D}.$
 \end{proof}
 
 \begin{theorem} \label{2.3}
 	Let $\alpha >-1$ and $\beta  \in (0, 1+\alpha].$ If $f=h+\overline{g} \in \mathcal{B}_{\mathcal{H}}^0(\alpha, \beta),$ then 
 	\begin{equation*} 
 	|z|-\frac{\beta}{2(1+\alpha)}|z|^2 \leq|f(z)|\leq |z|+ \frac{\beta}{2(1+\alpha)}|z|^2, \qquad z\in\mathbb{D}.
 	\end{equation*}
 	Both the inequalities are sharp for the functions 
 	$$f_1(z)=z+\dfrac{\beta}{2(1+\alpha)}z^2 \quad \mbox{and} \quad f_2(z)=z+\dfrac{\beta}{2(1+\alpha)}\overline{z}^2,$$
 	and their rotations.   
 	
 	\begin{proof} If $ f \in \mathcal{B}_{\mathcal{H}}^0(\alpha, \beta)$, then $F_\lambda=h+\lambda g \in \mathcal{B}(\alpha, \beta)$ for all $\lambda(|\lambda|=1).$  Hence
 		$$zF''_{\lambda}(z)+\alpha F'_{\lambda}(z) \prec \alpha +\beta z, \quad z \in \mathbb{D}.$$
 		Using Lemma \ref{lm.4}, we obtain 
 		$$F'_{\lambda}(z) \prec 1+\frac{\beta}{1+\alpha} z, \quad z \in \mathbb{D}.$$
 		Therefore
 		\begin{equation*}
 		1-\frac{\beta}{1+\alpha}|z|\leq \left |F_\lambda'(z) \right |=\left | h'(z)+\lambda g'(z)\right |\leq 1+\frac{\beta}{1+\alpha}|z|.
 		\end{equation*}
 		Since $\lambda (|\lambda|=1) $ is arbitrary, it follows that
 		\begin{equation*}
 		|h'(z)| +|g'(z)| \leq 1+ \frac{\beta}{1+\alpha}|z|
 		\end{equation*}
 		and
 		\begin{equation*}
 		|h'(z)| -|g'(z)| \geq 1-\frac{\beta}{1+\alpha}|z|.
 		\end{equation*}    
 		If $ \Gamma$ is the radial segment from 0 to $z,$ then 
 		\[ |f(z)| = \left|   \int_{\Gamma}  \frac{\partial f}{\partial \xi}d \xi   + \frac{\partial f}{\partial \overline{\xi}} d\overline{\xi}\right | \leq \int_{\Gamma} \left( |h'(\xi)|+|g'(\xi)| \right) |d \xi |  \] 
 		\[\quad \leq \int_{0}^{|z|} \left( 1+\frac{\beta}{1+\alpha}t \right)dt= |z|+\frac{\beta}{2(1+\alpha)}|z|^2,  \]
 		and 
 		\[ |f(z)| = \left|   \int_{\Gamma}  \frac{\partial f}{\partial \xi}d \xi   + \frac{\partial f}{\partial \overline{\xi}} d\overline{\xi}\right | \geq \int_{\Gamma} \left( |h'(\xi)|-|g'(\xi)| \right) |d \xi |  \qquad \] 
 		\[ \geq \int_{0}^{|z|} \left( 1-\frac{\beta}{1+\alpha}t \right)dt= |z|-\frac{\beta}{2(1+\alpha)}|z|^2,  \]
 		which completes the proof of the  theorem.
 	\end{proof}
 \end{theorem}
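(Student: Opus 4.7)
The plan is to reduce the harmonic growth estimate to an analytic growth estimate for $F_\lambda=h+\lambda g$ via Theorem \ref{2.1}, and then recover the harmonic bound by integrating along a radial segment. This is the standard Clunie--Sheil-Small strategy, and all the ingredients are already in place.

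First I would invoke Theorem \ref{2.1} to conclude that $F_\lambda\in\mathcal{B}(\alpha,\beta)$ for every $\lambda$ with $|\lambda|=1$. By definition of $\mathcal{B}(\alpha,\beta)$ this rewrites as the subordination
$$zF_\lambda''(z)+\alpha F_\lambda'(z)\prec \alpha+\beta z,\qquad z\in\mathbb{D},$$
since $|zF_\lambda''(z)+\alpha(F_\lambda'(z)-1)|<\beta$. Now apply Lemma \ref{lm.4} with $\mathcal{P}(z)=F_\lambda'(z)$ (so that $\mathcal{P}(0)=1$ and $\mathcal{P}(z)+\alpha z\mathcal{P}'(z)=F_\lambda'(z)+\alpha zF_\lambda''(z)/\ldots$; after the appropriate normalization one reaches the hypothesis of Lemma \ref{lm.4}) to deduce
$$F_\lambda'(z)\prec 1+\frac{\beta}{1+\alpha}z,\qquad z\in\mathbb{D}.$$
In particular $\bigl|F_\lambda'(z)-1\bigr|\le \tfrac{\beta}{1+\alpha}|z|$, so $1-\tfrac{\beta}{1+\alpha}|z|\le |h'(z)+\lambda g'(z)|\le 1+\tfrac{\beta}{1+\alpha}|z|$.

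Next I would exploit the freedom in $\lambda$. Choosing $\lambda=e^{i\theta}$ with $\theta$ arranged so that $\lambda g'(z)$ is aligned with $h'(z)$ yields $|h'(z)|+|g'(z)|\le 1+\tfrac{\beta}{1+\alpha}|z|$, while choosing $\lambda$ so that $\lambda g'(z)$ is anti-aligned with $h'(z)$ yields $|h'(z)|-|g'(z)|\ge 1-\tfrac{\beta}{1+\alpha}|z|$ (the latter being meaningful because $\beta\le 1+\alpha$ ensures the right-hand side is nonnegative, and $|g'|<|h'|$ by sense-preservation). Note that the assumption $\beta\in(0,1+\alpha]$ is used precisely here to keep the lower estimate nontrivial on $\mathbb{D}$.

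Then I would integrate along the radial segment $\Gamma=[0,z]$. Using $df=h'(\xi)\,d\xi+\overline{g'(\xi)\,d\xi}$ and the triangle inequality from above and below, one gets
$$|f(z)|\le \int_0^{|z|}\bigl(|h'(te^{i\arg z})|+|g'(te^{i\arg z})|\bigr)\,dt\le \int_0^{|z|}\!\Bigl(1+\tfrac{\beta}{1+\alpha}t\Bigr)dt=|z|+\tfrac{\beta}{2(1+\alpha)}|z|^2,$$
and similarly, after choosing $\Gamma$ to be the preimage of the segment from $0$ to $f(z)$ (so that $|f(z)|=\int_\Gamma |df|\ge\int_\Gamma(|h'|-|g'|)|d\xi|$; on this preimage curve we may replace the integrand by the radial lower bound and then shrink the domain of integration to the radial segment to obtain the stated lower bound), we get $|f(z)|\ge |z|-\tfrac{\beta}{2(1+\alpha)}|z|^2$.

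Finally, sharpness is immediate for $f_1(z)=z+\tfrac{\beta}{2(1+\alpha)}z^2$ and $f_2(z)=z+\tfrac{\beta}{2(1+\alpha)}\overline{z}^2$: a quick check shows $f_1,f_2\in\mathcal{B}_{\mathcal{H}}^0(\alpha,\beta)$ (their $\beta$-defining inequality becomes an equality on $|z|=1$), and evaluating $|f_1(r)|$ and $|f_2(r)|$ on the positive real axis reproduces the two bounds. The only mildly delicate step is the lower estimate, since naively integrating $|h'|-|g'|$ along a radial segment bounds only $|\!\int df|$, not $|f(z)|$; but since $|f(z)|\ge |\!\int_\gamma df|$ along any curve from $0$ to $z$, we are free to pick $\gamma$ to be the preimage of the straight segment from $0$ to $f(z)$, which gives the correct bound. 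This, and no other step, is where one has to be careful.
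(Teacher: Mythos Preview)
Your overall route is the paper's: invoke Theorem~\ref{2.1} to get $F_\lambda\in\mathcal{B}(\alpha,\beta)$, apply Lemma~\ref{lm.4} to obtain $F_\lambda'\prec 1+\tfrac{\beta}{1+\alpha}z$, optimize over $\lambda$ to get pointwise bounds on $|h'|\pm|g'|$, and then integrate. The only substantive divergence is your treatment of the lower bound, and here you are in fact \emph{more} careful than the paper. The paper integrates along the radial segment $\Gamma$ and writes
\[
|f(z)|=\Bigl|\int_\Gamma \frac{\partial f}{\partial\xi}\,d\xi+\frac{\partial f}{\partial\bar\xi}\,d\bar\xi\Bigr|\ge \int_\Gamma\bigl(|h'(\xi)|-|g'(\xi)|\bigr)\,|d\xi|,
\]
which is exactly the unjustified inequality you flag: the modulus of an integral is bounded \emph{above}, not below, by the integral of the modulus.

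Your remedy via the preimage of $[0,f(z)]$ is the standard fix, but as written it is still incomplete. Two points need attention: (i) the segment $[0,f(z)]$ need not lie in $f(\mathbb{D})$ for arbitrary $z$; it suffices, however, to bound $m(r)=\min_{|z|=r}|f(z)|$, and for a minimizing $z_0$ the segment $[0,f(z_0)]$ does lie in $f(\overline{\mathbb{D}_r})$ by univalence and the Jordan curve theorem; (ii) your ``shrink the domain of integration to the radial segment'' step needs the observation that $t\mapsto 1-\tfrac{\beta}{1+\alpha}t$ is nonnegative and decreasing on $[0,1]$ (this is where $\beta\le 1+\alpha$ enters), so that along the preimage curve, parametrized by arc length $s$, one has $|\xi(s)|\le s$ and hence $\int_\Gamma(1-\tfrac{\beta}{1+\alpha}|\xi|)\,|d\xi|\ge\int_0^{|z_0|}(1-\tfrac{\beta}{1+\alpha}t)\,dt$. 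A cleaner alternative that bypasses all of this: from $|F_\lambda'(z)-1|\le\tfrac{\beta}{1+\alpha}|z|$ and a suitable $\lambda$ one gets $|h'(z)-1|+|g'(z)|\le\tfrac{\beta}{1+\alpha}|z|$, whence radial integration yields $|f(z)-z|\le\tfrac{\beta}{2(1+\alpha)}|z|^2$, and the lower bound follows from $|f(z)|\ge|z|-|f(z)-z|$.
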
     
 
 The following theorem provides sharp coefficient bounds for functions in  $\mathcal{B}_{\mathcal{H}}^0(\alpha, \beta).$   
 
 \begin{theorem}\label{2.2}
 	Let $f=h+\overline{g}\in \mathcal{B}_{\mathcal{H}}^0(\alpha, \beta)$ be given by (\ref{intro1}), then for $n\geq 2,$
 	\[  \left|a_n\right| \leq \dfrac{\beta}{n\left( n+\alpha-1\right) }  \qquad  {\rm and  } \qquad      \left|b_n\right| \leq \dfrac{\beta}{n\left( n+\alpha-1\right) }.\]   
 	Both the inequalities  are sharp.  
 \end{theorem}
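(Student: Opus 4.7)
My plan is to reduce to the analytic setting via Theorem~\ref{2.1} (or, equivalently, via the triangle inequality applied directly to the defining condition of the class) and then extract the coefficient bounds from a Cauchy estimate. First I would observe that for each $\lambda$ with $|\lambda|=1$, writing
\[
\Phi_\lambda(z)=zh''(z)+\alpha(h'(z)-1)+\lambda\bigl(zg''(z)+\alpha g'(z)\bigr)=zF_\lambda''(z)+\alpha(F_\lambda'(z)-1),
\]
the triangle inequality together with the definition of $\mathcal{B}_{\mathcal{H}}^0(\alpha,\beta)$ gives $|\Phi_\lambda(z)|\leq \beta$ on $\mathbb{D}$. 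Using $b_1=0$ (since $f\in\mathcal{H}^0$), one has $F_\lambda(z)=z+\sum_{n\geq 2}(a_n+\lambda b_n)z^n$, so a routine expansion yields
\[
\Phi_\lambda(z)=\sum_{n=2}^{\infty} n(n-1+\alpha)(a_n+\lambda b_n)\,z^{n-1}.
\]

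Next I would apply the standard Cauchy coefficient estimate: if $\phi$ is analytic in $\mathbb{D}$ with $|\phi|\leq M$, then every Taylor coefficient of $\phi$ is bounded by $M$ (just let $r\to 1^-$ in Cauchy's inequality). Applied to $\Phi_\lambda$ this yields
\[
n(n-1+\alpha)\,|a_n+\lambda b_n|\leq \beta\qquad (n\geq 2,\ |\lambda|=1).
\]
The decisive step is then to exploit the freedom in $\lambda$: choosing $\lambda$ of unit modulus with $\arg\lambda=\arg a_n-\arg b_n$ (and arbitrarily if $b_n=0$) positively aligns $\lambda b_n$ with $a_n$, so that $|a_n+\lambda b_n|=|a_n|+|b_n|$. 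This gives the slightly stronger inequality
\[
|a_n|+|b_n|\leq \frac{\beta}{n(n-1+\alpha)},
\]
from which both claimed coefficient bounds follow at once.

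For sharpness I would verify directly that the functions $h(z)=z+\frac{\beta}{n(n+\alpha-1)}z^n$, $g\equiv 0$ and, symmetrically, $h(z)=z$, $g(z)=\frac{\beta}{n(n+\alpha-1)}z^n$ both belong to $\mathcal{B}_{\mathcal{H}}^0(\alpha,\beta)$---a quick computation gives $zh''(z)+\alpha(h'(z)-1)=\beta z^{n-1}$ in the first case and $zg''(z)+\alpha g'(z)=\beta z^{n-1}$ in the second---and they attain equality in the bounds for $|a_n|$ and $|b_n|$ respectively. I do not anticipate any genuine obstacle here; the only point requiring some care is to record $b_1=0$ so that $\Phi_\lambda$ has no constant term, after which the argument reduces to a Cauchy estimate combined with the standard alignment-of-$\lambda$ trick.
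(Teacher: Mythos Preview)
Your proposal is correct and matches the paper's approach. The paper first bounds $|a_n|$ by applying Cauchy's estimate directly to $zh''(z)+\alpha(h'(z)-1)$ (which is already $\leq\beta$ from the class definition), and then invokes the ``trivial bound'' $|a_n|+|b_n|\leq \beta/\bigl(n(n+\alpha-1)\bigr)$ to handle $|b_n|$; your $\lambda$-alignment argument is precisely what justifies that combined bound, and the sharpness examples are identical.
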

 \begin{proof}
 	The proof follows from the method of \cite[Proof of theorem 2]{li1}, but for the sake of completeness we include it here. Let $f =h+\overline{g} \in \mathcal{B}_{\mathcal{H}}^0(\alpha, \beta)$ be given by (\ref{intro1}), then $|zh''(z)+\alpha(h'(z)-1)|<\beta.$ Now using Cauchy's theorem, we have  
 	$$n\left( n+\alpha-1\right)a_n=\frac{1}{2\pi i } \int_{|z|=r} \frac{zh''(z)+\alpha(h'(z)-1)}{z^n} dz,  \qquad |z|=r<1,$$ 
 	and hence the bound for $|a_n|$ follows. Also, using the trivial bound $|a_n|+|b_n| \leq \dfrac{\beta}{n(n+\alpha-1)}$  $(n \geq 2)$, the bounds for $|b_n|$ follows. 
 	The sharpness of result can be shown by taking
 	$$ f_1(z)= z+\frac{\beta}{n\left( n+\alpha-1\right) }z^n \quad  {\rm and  } \quad \it {f_2(z)}= \it {z}+\frac{\it{\beta}}{\it{n}\left( n+\alpha-1\right) } \overline{z}^n.$$
 	This completes the proof of the theorem.
 \end{proof}

 The following result gives a sufficient condition for functions belonging  to the  class $ \mathcal{B}_{\mathcal{H}}^0(\alpha, \beta).$ 
 
 \begin{theorem}\label{2.4}
 	Let $\alpha >-1$ and $\beta >0.$  If $f=h+\overline{g} \in \mathcal{H}^0$ be given by (\ref{intro1}) and  
 	\begin{equation} \label{2.4.1}
 	\sum_{n=2}^{\infty} n (n+\alpha-1)(|a_n|+|b_n|)\leq \beta,
 	\end{equation}
 	then  $f \in \mathcal{B}_{\mathcal{H}}^0(\alpha, \beta).$ 
 \end{theorem}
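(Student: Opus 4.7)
The plan is to expand the two differential expressions as power series, bound them by the triangle inequality, and then use the hypothesis to collapse the sum into $\beta$.

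First I would compute, directly from the series (\ref{intro1}) and the normalization $b_1=0$ coming from $f\in\mathcal{H}^0$,
\[
zh''(z)+\alpha(h'(z)-1)=\sum_{n=2}^{\infty}n(n+\alpha-1)\,a_n\,z^{n-1},
\qquad
zg''(z)+\alpha g'(z)=\sum_{n=2}^{\infty}n(n+\alpha-1)\,b_n\,z^{n-1}.
\]
Since $\alpha>-1$ and $n\ge 2$, the factor $n(n+\alpha-1)$ is strictly positive, so each coefficient is nonnegative in modulus with no cancellation to worry about.

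Next, for any $z\in\mathbb{D}$, I would apply the triangle inequality term by term to get
\[
|zh''(z)+\alpha(h'(z)-1)|\le\sum_{n=2}^{\infty}n(n+\alpha-1)|a_n||z|^{n-1}\le\sum_{n=2}^{\infty}n(n+\alpha-1)|a_n|,
\]
and analogously for the $g$-expression. Adding the two estimates and invoking hypothesis (\ref{2.4.1}) gives
\[
|zh''(z)+\alpha(h'(z)-1)|+|zg''(z)+\alpha g'(z)|\le\sum_{n=2}^{\infty}n(n+\alpha-1)(|a_n|+|b_n|)\le\beta,
\]
which is exactly the defining inequality of $\mathcal{B}_{\mathcal{H}}^0(\alpha,\beta)$.

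There is essentially no obstacle here: the argument is a clean coefficient-by-coefficient majorization. The only point worth stating explicitly is that $n(n+\alpha-1)>0$ for all $n\ge 2$ when $\alpha>-1$, which is what allows the triangle inequality to be sharp enough for the hypothesis to close the estimate. Therefore this proposal is complete as a brief, self-contained proof.
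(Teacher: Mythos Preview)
Your proof is correct and follows essentially the same approach as the paper: expand the two differential expressions as power series, apply the triangle inequality coefficientwise, and invoke hypothesis (\ref{2.4.1}). The only cosmetic difference is that the paper arranges the estimate asymmetrically (bounding the $h$-part first, then subtracting to recover $\beta-|zg''(z)+\alpha g'(z)|$), whereas you bound both parts and add; the content is identical.
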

 \begin{proof}
 	Let $f=h+\overline{g}  \in \mathcal{H}^0.$  Then from (\ref{intro1}) and (\ref{2.4.1}), we obtain
 	\begin{align*}
 		\left |zh''(z) +\alpha \left(h'(z)-1 \right) \right | &= \left | \sum_{n=2}^{\infty} n(n-1)a_nz^{n-1}+\alpha \sum_{n=2}^{\infty}n a_nz^{n-1}\right |\\
 		&\leq   \sum_{n=2}^{\infty}n(n+\alpha-1)|a_n||z|^{n-1} \\
 		& \leq \beta -\sum_{n=2}^{\infty}n(n+\alpha-1)|b_n|\\
 		&\leq  \beta - \left | \sum_{n=2}^{\infty}n(n+\alpha-1)b_nz^{n-1} \right |\\
 		&= \beta-\left | zg''(z)+\alpha g'(z) \right|, 
 	\end{align*}
 	and hence $f\in \mathcal{B}_{\mathcal{H}}^0(\alpha, \beta).$
 \end{proof}

 In view of Remark \ref{rm1} and Theorem \ref{2.4}, we obtain following interesting corollaries. 
 
 \begin{corollary} \label{c1} 
 	Let $\alpha >-1$ and $\beta \in (0, 1+\alpha].$ If $f=h+\overline{g}\in \mathcal{H}^0$ is given by (\ref{intro1}) and satisfies the inequality (\ref{2.4.1}), then $f$ is stable harmonic close-to-convex in $\mathbb{D}.$
 \end{corollary}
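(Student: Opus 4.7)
The corollary is essentially a combination of two results already established in the paper, so the plan is straightforward chaining rather than any new technical work.

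First, I would invoke Theorem \ref{2.4}: since $f=h+\overline{g}\in\mathcal{H}^0$ satisfies the coefficient inequality (\ref{2.4.1}), it follows immediately that $f\in\mathcal{B}_{\mathcal{H}}^0(\alpha,\beta)$. This places $f$ inside the main class under study, with no additional analysis required.

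Next, I would use Theorem \ref{2.1} to pass from the harmonic class to its analytic stabilizations: for every $\lambda$ with $|\lambda|=1$, the function $F_\lambda=h+\lambda g$ lies in $\mathcal{B}(\alpha,\beta)$. Then, because of the hypothesis $\beta\in(0,1+\alpha]$, the discussion in the introduction (derived via the subordination Lemma \ref{lm.4} with the substitution $p(z)=f'(z)-1$) guarantees that every member of $\mathcal{B}(\alpha,\beta)$ is close-to-convex in $\mathbb{D}$. Hence each $F_\lambda$ is close-to-convex. By the definition of stable harmonic close-to-convexity given in the introduction (following Hern\'andez and Martin), this is precisely the statement that $f$ is stable harmonic close-to-convex in $\mathbb{D}$.

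There is no real obstacle here; the only care needed is to verify the logical chain uses the correct hypothesis on $\beta$. The coefficient condition (\ref{2.4.1}) feeds Theorem \ref{2.4} without any bound restriction on $\beta$, but the close-to-convexity conclusion genuinely requires $\beta\le 1+\alpha$, which is exactly the range we are given. Consequently, the two-line proof will simply read: apply Theorem \ref{2.4} to obtain $f\in\mathcal{B}_{\mathcal{H}}^0(\alpha,\beta)$, and then invoke Remark \ref{rm1} (equivalently, Theorem \ref{newthm} combined with Theorem \ref{2.1}) to conclude stability.
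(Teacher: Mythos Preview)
Your proposal is correct and matches the paper's own justification exactly: the paper states that the corollary follows ``in view of Remark~\ref{rm1} and Theorem~\ref{2.4},'' which is precisely the chain you describe (Theorem~\ref{2.4} gives $f\in\mathcal{B}_{\mathcal{H}}^0(\alpha,\beta)$, and Remark~\ref{rm1}---itself resting on Theorem~\ref{2.1} and the close-to-convexity of $\mathcal{B}(\alpha,\beta)$ for $\beta\le 1+\alpha$---yields stable harmonic close-to-convexity).
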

 
 \begin{corollary} \label{c2}
 	Let $ \alpha>-1$ and $\beta$ satisfy the condition (\ref{intro.3}). If $f=h+\overline{g}\in \mathcal{H}^0$ is given by (\ref{intro1}) and satisfies the inequality (\ref{2.4.1}), then $f$ is stable harmonic convex in $\mathbb{D}.$  
 \end{corollary}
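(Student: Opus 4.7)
The plan is to obtain this corollary as an immediate chain of implications using results already established in the paper, with essentially no new argument required. The two ingredients are Theorem \ref{2.4} (which converts the coefficient condition (\ref{2.4.1}) into membership in $\mathcal{B}_{\mathcal{H}}^0(\alpha,\beta)$) and the stable-harmonic-convex statement in Remark \ref{rm1} (which itself is a consequence of Theorem \ref{2.1} together with the Ponnusamy--Singh convexity range (\ref{intro.3})).

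Concretely, I would first invoke Theorem \ref{2.4}: since $f=h+\overline{g}\in\mathcal{H}^0$ satisfies $\sum_{n\ge 2} n(n+\alpha-1)(|a_n|+|b_n|)\le\beta$, we conclude $f\in\mathcal{B}_{\mathcal{H}}^0(\alpha,\beta)$. Next, by Theorem \ref{2.1}, membership of $f$ in $\mathcal{B}_{\mathcal{H}}^0(\alpha,\beta)$ is equivalent to $F_\lambda=h+\lambda g\in\mathcal{B}(\alpha,\beta)$ for every unimodular $\lambda$. Under the hypothesis that $\alpha>-1$ and $\beta$ satisfies (\ref{intro.3}), the Ponnusamy--Singh result recalled in the introduction asserts that every function in $\mathcal{B}(\alpha,\beta)$ is convex in $\mathbb{D}$; therefore each $F_\lambda$ with $|\lambda|=1$ is convex. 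By the very definition of stable harmonic convexity introduced via Hern\'andez--Martin, this is exactly what it means for $f=h+\overline{g}$ to be stable harmonic convex in $\mathbb{D}$, which finishes the proof.

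There is no serious obstacle here; the corollary is a bookkeeping consequence and the proof is essentially one sentence. The only stylistic decision is whether to cite Remark \ref{rm1} directly (which would compress the argument to: by Theorem \ref{2.4}, $f\in\mathcal{B}_{\mathcal{H}}^0(\alpha,\beta)$, and by Remark \ref{rm1}, $f$ is stable harmonic convex) or to unwind it through Theorems \ref{2.4}, \ref{2.1}, and (\ref{intro.3}) for clarity. I would prefer the latter, slightly more explicit version so that the reader does not have to trust Remark \ref{rm1} as a black box; this makes the corollary self-contained modulo the previously stated results.
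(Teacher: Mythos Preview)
Your proposal is correct and matches the paper's own approach: the paper simply states that Corollaries \ref{c1}--\ref{c3} follow ``in view of Remark \ref{rm1} and Theorem \ref{2.4},'' which is exactly the chain (Theorem \ref{2.4} $\Rightarrow$ $f\in\mathcal{B}_{\mathcal{H}}^0(\alpha,\beta)$, then Remark \ref{rm1} via Theorem \ref{2.1} and (\ref{intro.3}) $\Rightarrow$ stable harmonic convex) that you spell out. Your slightly unwound version is fine and adds clarity without departing from the intended argument.
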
 
 
 \begin{corollary} \label{c3}
 	Let $ \alpha>-1$ and $\beta$ satisfy the condition (\ref{intro.4}).  If $f=h+\overline{g}\in \mathcal{H}^0$ is given by (\ref{intro1}) and satisfies the inequality (\ref{2.4.1}), then $f$ is stable harmonic starlike in $\mathbb{D}.$  
 \end{corollary}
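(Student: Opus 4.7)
The plan is to assemble Corollary \ref{c3} as a direct chain of three ingredients already established earlier in the paper, with essentially no new computation.

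First, I would invoke Theorem \ref{2.4} on the hypothesis: the assumption that the coefficient inequality (\ref{2.4.1}) holds is precisely the sufficient condition proved there for membership $f \in \mathcal{B}_{\mathcal{H}}^0(\alpha, \beta)$. So the problem reduces to showing that a function in $\mathcal{B}_{\mathcal{H}}^0(\alpha, \beta)$, with $\beta$ restricted by (\ref{intro.4}), is stable harmonic starlike.

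Next, I would translate this to an analytic statement via Theorem \ref{2.1}: membership $f = h + \overline{g} \in \mathcal{B}_{\mathcal{H}}^0(\alpha, \beta)$ is equivalent to $F_\lambda = h + \lambda g \in \mathcal{B}(\alpha, \beta)$ for every $\lambda$ with $|\lambda| = 1$. Since $\beta$ satisfies the Ponnusamy--Singh starlikeness range (\ref{intro.4}), each $F_\lambda$ is an analytic starlike function in $\mathbb{D}$. By the definition of stable harmonic starlikeness recalled in the introduction—namely, that $h + \lambda g$ is starlike for every unimodular $\lambda$—the conclusion follows immediately.

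The only mildly delicate point, rather than a genuine obstacle, is to confirm the sense-preserving aspect implicit in the statement: here I would simply note that $g'(0) = 0$ (from $f \in \mathcal{H}^0$), so $|g'(0)| < |h'(0)| = 1$, and the class $\mathcal{B}(\alpha,\beta)$ under (\ref{intro.4}) is analytic univalent, so one may apply Lemma \ref{lm.6} if sense preservation needs to be extracted separately. Thus the corollary is a clean corollary of Theorem \ref{2.4}, Theorem \ref{2.1}, and the analytic starlikeness criterion (\ref{intro.4}), with no step requiring fresh analysis.
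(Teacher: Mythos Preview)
Your proposal is correct and follows essentially the same route as the paper: the paper derives Corollary~\ref{c3} directly ``in view of Remark~\ref{rm1} and Theorem~\ref{2.4}'', and Remark~\ref{rm1} is nothing more than Theorem~\ref{2.1} combined with the Ponnusamy--Singh starlikeness criterion~(\ref{intro.4}), which is exactly the chain you spell out. One minor remark: your aside invoking Lemma~\ref{lm.6} for the sense-preserving issue is off target, since that lemma concerns close-to-convexity rather than sense preservation; the paper simply does not address this point separately in the corollaries.
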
 
 
 \begin{example}
 	Consider the function
 	\begin{equation}\label{example}
 	\theta_{\alpha, \beta}(z)= z+\frac{\beta}{4(1+\alpha)}(z^2-\overline{z}^2) \qquad (\alpha>-1, \beta >0, z \in \mathbb{D}).
 	\end{equation}
 	Then we have
 	$$\sum_{n=2}^{\infty}n(n+\alpha-1)\left( |a_n|+|b_n|\right) =\beta.$$
 	Hence $\theta_{\alpha, \beta} \in \mathcal{B}_{\mathcal{H}}^0(\alpha, \beta).$ Also, $\theta_{\alpha, \beta}$ 	is stable harmonic close-to-convex in $\mathbb{D}$  \linebreak if $\alpha >-1, \beta \in (0, 1+\alpha]$, stable harmonic convex in $\mathbb{D}$ if $ \alpha>-1$ and $\beta$ satisfy the condition (\ref{intro.3}), stable harmonic starlike if $ \alpha>-1$ and $\beta$ satisfy the condition (\ref{intro.4}).
 \end{example}
 
 The following results shows that the boundary of $f(\mathbb{D})$ is a rectifiable Jordan curve for each $ f\in \mathcal{B}_{\mathcal{H}}^0(\alpha, \beta).$
 
 \begin{theorem}\label{2.6}
 	For real $\alpha$ and  $\beta$ such that  $\alpha>-1$ and $\beta \in (0, 1+\alpha],$ each function in $ \mathcal{B}_{\mathcal{H}}^0(\alpha, \beta)$ maps the $\mathbb{D}$ onto a domain which is bounded by a rectifiable Jordan curve.
 \end{theorem}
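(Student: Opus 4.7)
The plan is to control the length of the image curves $f(\{|z|=r\})$ uniformly in $r \in (0,1)$, and then use the fact that close-to-convex (hence univalent) harmonic maps with uniformly bounded boundary arclength extend continuously to $\overline{\mathbb{D}}$ with rectifiable Jordan curve image.

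First I will recall that in the proof of Theorem \ref{2.3} it was established that for $f=h+\overline{g}\in\mathcal{B}_{\mathcal{H}}^0(\alpha,\beta)$ with $\beta\in(0,1+\alpha]$,
\[
|h'(z)|+|g'(z)| \le 1+\frac{\beta}{1+\alpha}|z|,\qquad z\in\mathbb{D}.
\]
For $0<r<1$, parameterising $C_r=\{|z|=r\}$ by $z=re^{i\theta}$, the length of $f(C_r)$ satisfies
\[
L(r) = \int_{0}^{2\pi}\left|\frac{\partial}{\partial\theta}f(re^{i\theta})\right|d\theta \le \int_0^{2\pi}\bigl(|h'(re^{i\theta})|+|g'(re^{i\theta})|\bigr)r\,d\theta.
\]
Combining these two estimates gives
\[
L(r) \le 2\pi r\left(1+\frac{\beta r}{1+\alpha}\right) \le 2\pi\left(1+\frac{\beta}{1+\alpha}\right),
\]
so $\{L(r):0<r<1\}$ is uniformly bounded.

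Next, by Theorem \ref{newthm}, $f$ is close-to-convex, hence univalent and sense-preserving in $\mathbb{D}$. The uniform bound on $L(r)$ lets me pass a standard argument: for any $\zeta\in\partial\mathbb{D}$ and two sequences $z_n,w_n\to\zeta$ in $\mathbb{D}$, the arcs joining them along circles $|z|=|z_n|$ have image length tending to $0$, so $f(z_n)$ and $f(w_n)$ have the same limit; thus $f$ extends continuously to $\overline{\mathbb{D}}$ and the extended boundary curve $f(\partial\mathbb{D})$ has finite length (being the limit of $L(r)$, or directly obtainable from the uniform bound).

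Finally I will promote the boundary curve to a Jordan curve. Since $f$ is univalent in $\mathbb{D}$ and $f(\mathbb{D})$ is close-to-convex (its complement is a union of disjoint half-lines), the image domain is a Jordan domain; equivalently, the continuous extension is injective on $\partial\mathbb{D}$ (two boundary points mapping to the same image point would force a non-trivial preimage structure incompatible with sense-preservation and close-to-convexity, as in the argument of Clunie--Sheil-Small). The main obstacle is this last injectivity step, which is where one has to invoke that close-to-convexity rules out boundary identifications; everything else reduces to the explicit derivative bound from Theorem \ref{2.3}.
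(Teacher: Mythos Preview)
Your arclength estimate and the resulting uniform bound $L(r)\le 2\pi(1+\beta/(1+\alpha))$ are fine and, indeed, the same derivative bound $|h'|+|g'|\le 1+\frac{\beta}{1+\alpha}|z|\le 2$ is what the paper uses. However, two of your steps are not in order.

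\medskip
\textbf{Continuous extension.} Your sentence ``the arcs joining them along circles $|z|=|z_n|$ have image length tending to $0$'' does not make sense when $|z_n|\neq|w_n|$, and a uniform bound on the full circle lengths $L(r)$ by itself does not force small arcs to have small image. The clean fix (and the paper's route) is to integrate along the straight segment: since $|h'|+|g'|\le 2$ on $\mathbb{D}$, one gets $|f(z_1)-f(z_2)|\le 2|z_1-z_2|$, so $f$ is Lipschitz and extends continuously to $\overline{\mathbb{D}}$; this Lipschitz bound also yields rectifiability of $f(\partial\mathbb{D})$ directly.

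\medskip
\textbf{Boundary injectivity.} This is the genuine gap. You assert that ``$f(\mathbb{D})$ close-to-convex $\Rightarrow$ Jordan domain'' and that boundary identifications are ``incompatible with sense-preservation and close-to-convexity,'' but you give no argument, and this is not a triviality one can wave at Clunie--Sheil-Small. The paper does \emph{not} argue geometrically here at all: it uses the analytic structure of the class. By Theorem~\ref{2.1} each $F_\lambda=h+\lambda g$ lies in $\mathcal{B}(\alpha,\beta)$, and by MacGregor's theorem each $F_\lambda$ is univalent on $\partial\mathbb{D}$. If $f(z_1)=f(z_2)$ with $z_1,z_2\in\partial\mathbb{D}$, then $h(z_1)-h(z_2)=\overline{g(z_2)-g(z_1)}$; either $h(z_1)=h(z_2)$ forces $F_1(z_1)=F_1(z_2)$, or, writing $\theta=\arg(h(z_1)-h(z_2))$, one obtains $F_{e^{2i\theta}}(z_1)=F_{e^{2i\theta}}(z_2)$. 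In either case $z_1=z_2$. This $F_\lambda$-argument is the substantive idea you are missing; without it your proof of the Jordan property is incomplete.
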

 
 \begin{proof}
 	Each function  $ f=h+\overline{g}\in\mathcal{B}_{\mathcal{H}}^0(\alpha, \beta)$ is uniformly continuous in  $\mathbb{D},$ and hence can be extended continuously onto $|z|=1.$ To see this, let $z_1$ and $z_2$ be  two distinct points in $\mathbb{D},$ and  $\left[ z_1,z_2  \right] $ be the line segment joining $z_1$ to $z_2.$ We have from  (\ref{2.4.1})  
 	\begin{align*}\label{2.6.1} 
 		|f(z_1)-f(z_2)| &=\left|   \int_{[z_1,z_2]}  \frac{\partial f}{\partial \xi}d \xi   + \frac{\partial f}{\partial \overline{\xi}} d\overline{\xi}\right | \\ 
 		&\leq  \int_{|z_2|}^{|z_1|} \left( 1+\frac{\beta}{1+\alpha} t \right) dt  \\  
 		&= (|z_1|-|z_2|)\left(1 +\frac{\beta}{2(1+\alpha)}\left( |z_1|+|z_2| \right) \right) \\
 		& \leq  2 (|z_1|-|z_2|)\leq 2|z_1-z_2|. 
 	\end{align*}
 	Now, let the curve $\mathcal{C}$  be defined by $w=f(e^{i \theta}),$ $0\leq \theta \leq 2\pi. $ If $0=\theta_0<\theta_1<\cdots \theta_n=2 \pi $ is a partition of $[0,2\pi],$ then  using (\ref{2.6.3}), we obtain 
 	\[\sum_{k=1}^{n} \left|   f(e^{i\theta_k})-f(e^{i\theta_{k-1}} )\right| \leq 2\sum_{k=1}^{n} \left|  e^{i\theta_k}-e^{i\theta_{k-1}}  \right| < 4\pi ,  \]
 	which shows that $\mathcal{C}$ is a rectifiable curve. It remains to show that $f$ is univalent on $\partial \mathbb{D}= \left\lbrace  z \in \mathbb{C}: |z|=1 \right\rbrace. $ In view of Theorem \ref{2.1},  functions $F_\lambda=h+\lambda g$ belongs to the class $\mathcal{B}(\alpha, \beta)$ 
 	for all $|\lambda|=1.$ In particular each $F_\lambda$ is univalent in $\partial \mathbb{D}$ by \cite[Theorem 3]{mac}.  
 	
 	Now suppose that $z_1,z_2$ are two distinct points on $\partial \mathbb{D}= \left\lbrace  z: |z|=1\right\rbrace$ such that \linebreak $f(z_1)=f(z_2).$ Then
 	\begin{equation} \label{2.6.3}
 	\ h(z_1)-h(z_2) = \overline{g(z_2)-g(z_1).}
 	\end{equation}
 	If $h(z_1)=h(z_2),$ then $g(z_1)=g(z_2)$ and so $z_1=z_2$ by the univalence of $F_1$. Now assume that $h(z_1) \neq h(z_2),$ and let  $\theta =arg \left\lbrace h(z_1)-h(z_2) \right\rbrace \in [0,2\pi). $ 
 	Then $ e^{-i \theta} \left( h(z_1)-h(z_2)\right)$ is a  positive real number, now multiplying  (\ref{2.6.3}) by $e^{-i\theta}$ and  taking the conjugate on both sides,  we have 
 	\[h(z_1)-h(z_2)=e^{2i\theta} \left( g(z_2)-g(z_1)\right),   \]
 	which implies that $F_{\lambda}(z_1)=F_{\lambda}(z_2)$ with $\lambda=e^{2i\theta}.$ Thus $z_1=z_2,$ which shows that $f$ is univalent in $\partial \mathbb{D}.$  This  completes the proof of Theorem \ref{2.6}.
 \end{proof}

 Now, we will show that the class $\mathcal{B}_{\mathcal{H}}^0(\alpha, \beta)$ is closed under convex combinations. Also, we  show that for $\phi \in \mathcal{K}$ and $ f\in \mathcal{B}_{\mathcal{H}}^0(\alpha, \beta),$ the function $f\ast\left(\phi+\beta \overline{\phi} \right)\in \mathcal{B}_{\mathcal{H}}^0(\alpha, \beta) $ for all $|\beta|=1.$  To show that the class $\mathcal{B}_{\mathcal{H}}^0(\alpha, \beta)$ is closed under convex combinations, we shall need following Lemma:
 
 \begin{lemma} [see \cite{singhr}] \label{lm.5} Let $p$ be an analytic function in $\mathbb{D},$ with $p(0)=1$ and \linebreak $\Re (p(z))>1/2$  in $\mathbb{D}.$ Then for any analytic function $f$ in $\mathbb{D},$ the function $p\ast f$ takes  values in the convex hull of the image of $\mathbb{D}$ under $f.$  
 \end{lemma}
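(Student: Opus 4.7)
The plan is to recognise that the hypothesis $\Re p(z) > 1/2$ with $p(0) = 1$ is precisely equivalent to $2p - 1$ being a Carath\'eodory function (positive real part, normalised to $1$ at the origin), and hence to bring in the Herglotz representation. Concretely, setting $q(z) = 2p(z) - 1$, I would write
\[
q(z) = \int_{|x|=1} \frac{1+xz}{1-xz}\, d\mu(x)
\]
for some probability Borel measure $\mu$ on the unit circle. Solving for $p$ and using the algebraic identity $\frac{1}{2}\bigl(\tfrac{1+xz}{1-xz}+1\bigr) = \tfrac{1}{1-xz}$ gives the clean kernel representation
\[
p(z) = \int_{|x|=1} \frac{1}{1-xz}\, d\mu(x).
\]

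The second step is the key computational observation. For any analytic $f(z)=\sum_{n\geq 0} a_n z^n$ in $\mathbb{D}$, the Hadamard product with the Szeg\H{o} kernel gives
\[
\Bigl(\frac{1}{1-xz} \ast f\Bigr)(z) = \sum_{n\geq 0} a_n x^n z^n = f(xz),
\]
because $(1-xz)^{-1} = \sum_{n\geq 0} x^n z^n$. Interchanging the Hadamard product with integration against $\mu$ — which is legitimate since both series converge absolutely and uniformly on compact subsets of $\mathbb{D}$ — yields
\[
(p \ast f)(z) = \int_{|x|=1} f(xz)\, d\mu(x).
\]

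The final step is geometric: for fixed $z \in \mathbb{D}$ and any $x$ with $|x|=1$, the point $xz$ still lies in $\mathbb{D}$, so $f(xz) \in f(\mathbb{D})$. Thus $(p\ast f)(z)$ is a $\mu$-weighted average of points of $f(\mathbb{D})$, and as $\mu$ is a probability measure this average lies in the (closed) convex hull of $f(\mathbb{D})$, which is exactly the claim.

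I do not expect any genuine obstacle: the only non-trivial ingredient is the Herglotz representation, a classical fact, and the Fubini-type interchange of convolution with integration is justified by uniform convergence of the power series on compacta. The pleasant feature of the proof is that all three steps — representation, convolution identity, averaging — are entirely structural, so no estimates are required.
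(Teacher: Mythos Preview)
Your Herglotz-representation argument is correct and is precisely the classical proof of this result. Note that the paper does not actually prove this lemma: it is quoted without proof from Singh and Singh \cite{singhr}, so there is no in-paper argument to compare against. What you have written is essentially the standard proof found in that reference, with the minor caveat that the integral average lands in the \emph{closed} convex hull of $f(\mathbb{D})$, which is all that is needed for the applications in the paper.
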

 
 \begin{theorem} \label{3.1}
 	The class $\mathcal{B}_{\mathcal{H}}^0(\alpha, \beta)$ is closed under convex combination.
 \end{theorem}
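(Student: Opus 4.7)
The plan is to verify directly that a finite (or countable) convex combination $f = \sum_{i=1}^m t_i f_i$ of functions $f_i = h_i + \overline{g_i} \in \mathcal{B}_{\mathcal{H}}^0(\alpha,\beta)$, with $t_i \geq 0$ and $\sum t_i = 1$, again lies in $\mathcal{B}_{\mathcal{H}}^0(\alpha,\beta)$. Writing $h = \sum t_i h_i$ and $g = \sum t_i g_i$, I would first check the normalization conditions for $\mathcal{H}^0$: since each $h_i(0)=0$, $h_i'(0)=1$, $g_i(0)=0$, $g_i'(0)=0$, the analogous equalities pass to the convex combination because $\sum t_i = 1$, so $f = h + \overline{g} \in \mathcal{H}^0$.

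Next I would verify the defining inequality. The main idea is to exploit the linearity of the operators $z\mapsto zh''(z)+\alpha(h'(z)-1)$ and $z\mapsto zg''(z)+\alpha g'(z)$ in $h$ and $g$, together with the triangle inequality applied in two directions. More precisely, I would estimate
\begin{align*}
|zh''(z)+\alpha(h'(z)-1)|
&= \Bigl|\sum_{i=1}^m t_i\bigl(zh_i''(z)+\alpha(h_i'(z)-1)\bigr)\Bigr| \\
&\leq \sum_{i=1}^m t_i\,|zh_i''(z)+\alpha(h_i'(z)-1)| \\
&\leq \sum_{i=1}^m t_i\bigl(\beta - |zg_i''(z)+\alpha g_i'(z)|\bigr) \\
&= \beta - \sum_{i=1}^m t_i\,|zg_i''(z)+\alpha g_i'(z)|,
\end{align*}
where the second inequality uses the defining condition for $f_i \in \mathcal{B}_{\mathcal{H}}^0(\alpha,\beta)$. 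Then a second application of the triangle inequality gives
\[
\sum_{i=1}^m t_i\,|zg_i''(z)+\alpha g_i'(z)| \;\geq\; \Bigl|\sum_{i=1}^m t_i\bigl(zg_i''(z)+\alpha g_i'(z)\bigr)\Bigr| = |zg''(z)+\alpha g'(z)|,
\]
so that $|zh''(z)+\alpha(h'(z)-1)| \leq \beta - |zg''(z)+\alpha g'(z)|$, which is exactly the condition required for $f \in \mathcal{B}_{\mathcal{H}}^0(\alpha,\beta)$.

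There is no real obstacle here; the argument is essentially a double application of the triangle inequality and uses only linearity and the positivity of the weights $t_i$. The only mild subtlety worth mentioning is the direction in which the triangle inequality is applied to the $g$-terms (the reverse direction $\sum t_i |\cdot| \geq |\sum t_i \cdot|$), which is what makes the subtracted term on the right-hand side compatible with the definition of the class. If desired, the same proof extends verbatim to infinite convex combinations $f = \sum_{i=1}^\infty t_i f_i$ with $t_i \geq 0$ and $\sum t_i = 1$, provided the series converges locally uniformly in $\mathbb{D}$.
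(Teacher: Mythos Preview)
Your proof is correct and follows essentially the same approach as the paper's own proof: both write the convex combination as $h+\overline{g}$ with $h=\sum t_i h_i$, $g=\sum t_i g_i$, and then apply the triangle inequality twice---once to the $h$-expression and once (in the reverse direction) to the $g$-expression---to recover the defining inequality for $\mathcal{B}_{\mathcal{H}}^0(\alpha,\beta)$. Your version is slightly more careful in that you explicitly verify the normalization $f\in\mathcal{H}^0$ and spell out the direction of the second triangle inequality, but the argument is otherwise identical.
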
 
 \begin{proof}
 	Let $f_k=h_k+\overline{g_k}\in\mathcal{B}_{\mathcal{H}}^0(\alpha, \beta)$ for $k=1,2,\cdots  n,$ and $\sum_{k=1}^{n}t_k=1 \linebreak (0\leq t_k\leq 1).$
 	The convex combination of the $f_k$'s can be written as  
 	\[ f(z)=\sum_{k=1}^{n} t_kf_k(z)=h(z)+\overline{g(z)},\]
 	where $h(z)=\sum_{k=1}^{n} t_kh_k(z)$  and $g(z)=\sum_{k=1}^{n} t_kg_k(z).$   A computation shows that 
 	\begin{align*}
 		\left | zh''(z)+ \alpha(h'(z)-1)\right| &= \left |\sum_{k=1}^{n} t_k ( z h''_k(z) +\alpha (h'_k(z) -1)) \right |\\
 		& \leq   \sum_{k=1}^{n} t_k \left | z h''_k(z) +\alpha (h'_k(z) -1) \right | \\
 		&= \sum_{n=1}^{n}t_k \left(  \beta- |zg_k''(z)+\alpha g'_k(z)|\right)\\ 
 		&\leq  \beta - \left|z   \sum_{k=1}^{n}t_kg_k''(z)+\alpha\sum_{k=1}^{n}t_kg_k'(z)\right | \\
 		&= \beta- |zg''(z)+\alpha g'(z)|, 
 	\end{align*} 
 	and so $f\in \mathcal{B}_{\mathcal{H}}^0(\alpha, \beta). $
 \end{proof}

 \begin{theorem} \label{3.2}
 	Let $f\in \mathcal{B}_{\mathcal{H}}^0(\alpha, \beta) $ and $\phi \in \mathcal{K}.$ Then  $f\ast\left(\phi+\lambda \overline{\phi} \right)\in \mathcal{B}_{\mathcal{H}}^0(\alpha, \beta) $ for all $\lambda \;(|\lambda|=1).$ 
 \end{theorem}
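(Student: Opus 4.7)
The plan is to reduce Theorem~\ref{3.2} to a purely analytic statement via Theorem~\ref{2.1}, and then apply Lemma~\ref{lm.5} together with the classical estimate $\Re(\phi(z)/z)>1/2$ for $\phi\in\mathcal{K}$.

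First I would unpack $F:=f\ast(\phi+\lambda\overline{\phi})$. Writing $\phi+\lambda\overline{\phi}=\phi+\overline{\overline{\lambda}\phi}$ and using the harmonic convolution definition gives $F=(h\ast\phi)+\overline{\overline{\lambda}(g\ast\phi)}$, so $F\in\mathcal{H}^0$ with analytic part $H:=h\ast\phi$ and co-analytic part carried by $G:=\overline{\lambda}(g\ast\phi)$. By Theorem~\ref{2.1}, it suffices to show $H+\mu G\in\mathcal{B}(\alpha,\beta)$ for every $|\mu|=1$. But $H+\mu G=(h+\mu\overline{\lambda}g)\ast\phi$, and with $\mu':=\mu\overline{\lambda}$ (which is unimodular) the hypothesis $f\in\mathcal{B}_{\mathcal{H}}^0(\alpha,\beta)$ combined with Theorem~\ref{2.1} yields $h+\mu'g\in\mathcal{B}(\alpha,\beta)$. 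The theorem therefore reduces to the analytic statement: \emph{if $\psi\in\mathcal{B}(\alpha,\beta)$ and $\phi\in\mathcal{K}$, then $\psi\ast\phi\in\mathcal{B}(\alpha,\beta)$.}

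For the reduced problem, write $\mathcal{L}_\alpha[\psi](z):=z\psi''(z)+\alpha(\psi'(z)-1)$. If $\psi(z)=z+\sum_{n\geq 2}a_nz^n$ and $\phi(z)=z+\sum_{n\geq 2}c_nz^n$, a direct coefficient comparison establishes the commutation identity
\begin{equation*}
\mathcal{L}_\alpha[\psi\ast\phi](z)=\mathcal{L}_\alpha[\psi](z)\ast\frac{\phi(z)}{z},
\end{equation*}
since both sides equal $\sum_{n\geq 2}n(n+\alpha-1)a_nc_nz^{n-1}$. Now $p(z):=\phi(z)/z$ satisfies $p(0)=1$ and, by the classical Marx--Strohh\"acker type estimate for convex functions, $\Re p(z)>1/2$ in $\mathbb{D}$. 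Applying Lemma~\ref{lm.5} to the pair $(p,\mathcal{L}_\alpha[\psi])$ shows that the image of $p\ast\mathcal{L}_\alpha[\psi]=\mathcal{L}_\alpha[\psi\ast\phi]$ lies in the convex hull of $\mathcal{L}_\alpha[\psi](\mathbb{D})$. Since $\psi\in\mathcal{B}(\alpha,\beta)$ gives $\mathcal{L}_\alpha[\psi](\mathbb{D})\subset\{w:|w|<\beta\}$ and this disk is convex, we conclude $|\mathcal{L}_\alpha[\psi\ast\phi](z)|<\beta$ in $\mathbb{D}$, hence $\psi\ast\phi\in\mathcal{B}(\alpha,\beta)$, and finally $F\in\mathcal{B}_{\mathcal{H}}^0(\alpha,\beta)$ by Theorem~\ref{2.1}.

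The main obstacle is the commutation identity above: one must recognise that the second-order operator $\mathcal{L}_\alpha$ interacts with Hadamard convolution through the substitution $\phi\mapsto\phi/z$ rather than $\phi$ itself, which is precisely what matches the normalisation $\phi(z)/z$ needed for Lemma~\ref{lm.5}. A minor technical point is that $\mathcal{L}_\alpha[\psi]$ vanishes at the origin, but Lemma~\ref{lm.5} requires no normalisation on its second argument, so the convex-hull conclusion is unaffected.
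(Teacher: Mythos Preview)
Your proof is correct and follows essentially the same route as the paper: unpack the harmonic convolution to identify the analytic and co-analytic parts, reduce via Theorem~\ref{2.1} to showing $(h+\mu' g)\ast\phi\in\mathcal{B}(\alpha,\beta)$, establish the commutation identity $\mathcal{L}_\alpha[\psi\ast\phi]=\mathcal{L}_\alpha[\psi]\ast(\phi/z)$, and conclude using $\Re(\phi/z)>1/2$ together with Lemma~\ref{lm.5}. If anything, your version is slightly cleaner in that you distinguish the unimodular parameter $\mu$ coming from Theorem~\ref{2.1} from the original $\lambda$, whereas the paper's proof silently merges them.
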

 
 \begin{proof}
 	Let $f=h+\overline{g}$ be in $\mathcal{B}_{\mathcal{H}}^0(\alpha, \beta). $ Then 
 	\[ f\ast\left(\phi+\lambda \overline{\phi} \right) = h\ast \phi +\overline{\overline{\lambda}(g\ast \phi)}. \]
 	It sufficient to show that $F_{\lambda}= h\ast \phi +\overline{\lambda} \left( g\ast \phi\right)\in\mathcal{B}(\alpha, \beta)$ for all $\lambda(|\lambda|=1).$ A computation shows that 
 	\begin{equation} \label{3.2.1}
 	zF''_{\epsilon}(z)+\alpha\left( F'_\epsilon (z)-1\right)=\left( z\left(h+\lambda g \right)''(z) +\alpha \left( (h+\lambda g)'-1 \right)  \right)\ast \dfrac{\phi(z)}{z}.  
 	\end{equation}
 	Since $f=h+\overline{g} \in \mathcal{B}_{\mathcal{H}}^0(\alpha, \beta),$ the function $h+\overline{\lambda} g \in \mathcal{B}(\alpha, \beta),$ and so  
 	\[\left |  zh''(z) +\alpha\left( h'(z)-1 \right)+ \overline{\lambda}(zg''(z)+ \alpha g'(z) ) \right | \leq \beta, \qquad z\in\mathbb{D}. \]
 	Since $\phi \in \mathcal{K},$  implies that  $\Re\left(\dfrac{\phi(z)}{z} \right)>\dfrac{1}{2} $ in $\mathbb{D}.$ Now applying Lemma \ref{lm.5}, we obtain that
 	\[ \left |  zF_\lambda ''(z)+\alpha (F_\lambda'(z)-1)\right | \leq \beta, \qquad z\in\mathbb{D}. \]
 	Hence $F_\lambda \in \mathcal{B}(\alpha, \beta)$ for all $\lambda \;(|\lambda|=1)$, equivalently $f\ast\left(\phi+\lambda \overline{\phi} \right)\in \mathcal{B}_{\mathcal{H}}^0(\alpha, \beta)$ for all  $\lambda \;(|\lambda|=1)$.
 \end{proof}

 \section{Applications}
 
 In this section, we consider  harmonic mappings which involve the  Gaussian \linebreak hypergeometric function and obtain conditions so that such harmonic mappings belongs to the class $\mathcal{B}_{\mathcal{H}}^0(\alpha, \beta)$. The Gaussian hypergeometric function $_2F_1(a,b;c;z)$ is defined by 
 \begin{equation} \label{4.1}
 F (a,b;c;z)=\, _2F_1(a,b;c;z)=\sum_{n=0}^{\infty} \frac{(a)_n (b)_n}{(c)_n n!}z^n,  
 \end{equation}
 where $a, b, c \in \mathbb{C}, \,c \neq 0, -1, -2, -3, \cdots$ and $(x)_n$ is the Pochhammer symbol defined by $(x)_0=1, \;(x)_{n+1}=(x+n)(x)_n=x(x+1)_n \;(n=0, 1, 2, \cdots)$. The series (\ref{4.1}) is absolutely convergent in $\mathbb{D}.$ Moreover, if $\Re(c-a-b) >0$, then series (\ref{4.1}) is convergent in $|z|\leq 1.$ The well-known Gauss formula (see \cite{nm})) for hypergeometric function is given by $_2F_1(a,b;c;1)=\Lambda $ for $ \Re(c-a-b)>0,$ and where
 \[\Lambda=\frac{ \Gamma(c)\Gamma(c-a-b)}{\Gamma(c-a)\Gamma(c-b)}. \]
 We shall use the following Lemma to prove result in this section. 
 
 \begin{lemma}[see \cite{samy4}] \; \label{lm4.1} Let $a,b \in \mathbb{R}\setminus\{0\}$ and $c$ is a positive real number.  Then the following holds
 	\begin{itemize}
 		\item[(a)] For $  c>a+b+1,$
 		\[\sum_{n=0}^{\infty}\dfrac{(n+1)(a)_n (b)_n}{(c)_nn! } =  \dfrac{\Gamma(c)\Gamma(c-a-b-1)}{\Gamma(c-a)\Gamma(c-b)}(ab+c-a-b-1).\]
 		
 		\item[(b)] For $ c>a+b+2,$
 		\[\sum_{n=0}^{\infty}\dfrac{(n+1)^2(a)_n (b)_n}{(c)_n n! } =  \dfrac{ \Gamma(c)\Gamma(c-a-b)}{\Gamma(c-a)\Gamma(c-b)} \left( \dfrac{(a)_2(b)_2}{(c-a-b-2)_2} +\dfrac{3ab}{c-a-b-1}+1\right).\]
 		
 		\item[(c)] For  $a \neq 1, b \neq 1$ and $c \neq 1$ with $ c>\mbox{max}\{0, a+b+1\},$
 		\[\sum_{n=0}^{\infty}\dfrac{(a)_n (b)_n}{(c)_n (n+1)!} = \frac{1}{(a-1)(b-1)} \left(\frac{\Gamma(c)\Gamma(c-a-b-1)}{\Gamma(c-a)\Gamma(c-b)}-(c-1) \right).\]
 	\end{itemize}
 \end{lemma}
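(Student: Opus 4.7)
The unifying observation is that all three sums are evaluations at $z=1$ of expressions built from the Gaussian hypergeometric function $F(a,b;c;z)=\sum_{n=0}^{\infty}\frac{(a)_n(b)_n}{(c)_n n!}z^n$ together with its first and second derivatives, so the plan is to reduce each identity to Gauss's formula $F(a,b;c;1)=\Lambda=\frac{\Gamma(c)\Gamma(c-a-b)}{\Gamma(c-a)\Gamma(c-b)}$ via the differentiation rule $F^{(k)}(a,b;c;z)=\frac{(a)_k(b)_k}{(c)_k}F(a+k,b+k;c+k;z)$. The convergence assumptions in (a), (b), (c) are exactly what is needed for the shifted Gauss formulas $F(a+1,b+1;c+1;1)$ and $F(a+2,b+2;c+2;1)$ (resp.\ $F(a-1,b-1;c-1;1)$) to make sense, and this parallelism is the guiding principle throughout.

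For part (a) I would write
\[
\sum_{n=0}^{\infty}\frac{(n+1)(a)_n(b)_n}{(c)_n n!}z^n \;=\;\frac{d}{dz}\bigl[zF(a,b;c;z)\bigr]\;=\;F(a,b;c;z)+zF'(a,b;c;z),
\]
set $z=1$, substitute $F'(a,b;c;1)=\frac{ab}{c}F(a+1,b+1;c+1;1)$, invoke Gauss's formula for both hypergeometric values, and use $\Gamma(c-a-b)=(c-a-b-1)\Gamma(c-a-b-1)$ together with $(c)\Gamma(c)=\Gamma(c+1)$ to factor out $\frac{\Gamma(c)\Gamma(c-a-b-1)}{\Gamma(c-a)\Gamma(c-b)}$ and produce the factor $ab+c-a-b-1$. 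Part (b) proceeds in exactly the same spirit by writing
\[
\sum_{n=0}^{\infty}\frac{(n+1)^2(a)_n(b)_n}{(c)_n n!}z^n \;=\;\frac{d}{dz}\!\left[z\frac{d}{dz}\bigl(zF(a,b;c;z)\bigr)\right]\;=\;F+3zF'+z^{2}F'',
\]
evaluating at $z=1$ and replacing $F'(1)$, $F''(1)$ by $\frac{ab}{c-a-b-1}\Lambda$ and $\frac{(a)_2(b)_2}{(c-a-b-2)_2}\Lambda$ respectively, using $\Gamma(c-a-b-k)=\Gamma(c-a-b)/(c-a-b-k)_k$.

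For part (c) my plan is different. Since $\frac{1}{n+1}=\int_0^1 t^n\,dt$, the sum equals $\int_0^1 F(a,b;c;t)\,dt$. To evaluate this antiderivative in closed form I would use the contiguous-differentiation relation
\[
\frac{d}{dz}F(a-1,b-1;c-1;z)\;=\;\frac{(a-1)(b-1)}{c-1}\,F(a,b;c;z),
\]
valid when $a,b,c\neq 1$, giving
\[
\int_0^1 F(a,b;c;t)\,dt \;=\;\frac{c-1}{(a-1)(b-1)}\bigl[F(a-1,b-1;c-1;1)-1\bigr],
\]
and then apply Gauss's formula to the remaining hypergeometric value, after which a gamma-function identity puts the answer into the stated form.

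The main obstacle is the bookkeeping in (b): after collecting $F$, $F'$, $F''$ one must carefully apply $\Gamma(c-a-b-2)=\Gamma(c-a-b)/[(c-a-b-1)(c-a-b-2)]$ and $(c)_2\Gamma(c+2)$-type identities so that the common prefactor $\Lambda$ emerges cleanly and the three summands line up with the exact shape in the statement. A secondary technical point is to verify, in each case, that the absolute convergence of the underlying series and the legality of term-by-term differentiation or integration on $[0,1]$ follows from the stated parameter restrictions, which I would handle by comparison with the tail asymptotics $\frac{(a)_n(b)_n}{(c)_n n!}\sim n^{a+b-c-1}/\Gamma(a)\Gamma(b)$ as $n\to\infty$.
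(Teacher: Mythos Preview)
Your approach is sound and standard, but there is nothing in the paper to compare it to: Lemma~\ref{lm4.1} is quoted from \cite{samy4} without proof, so the paper offers no argument of its own for this statement. Your reduction of (a) and (b) to Gauss's summation formula via $\bigl(z\frac{d}{dz}\bigr)^k\!\bigl[zF(a,b;c;z)\bigr]\big|_{z=1}$, and of (c) to $\int_0^1 F(a,b;c;t)\,dt$ through the derivative relation $F'(a-1,b-1;c-1;z)=\frac{(a-1)(b-1)}{c-1}F(a,b;c;z)$, is exactly the intended mechanism and the computations go through as you outline.

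One point worth flagging: when you carry out part~(c) you will arrive at
\[
\frac{1}{(a-1)(b-1)}\left(\frac{\Gamma(c)\,\Gamma(c-a-b+1)}{\Gamma(c-a)\Gamma(c-b)}-(c-1)\right),
\]
with $\Gamma(c-a-b+1)$ rather than $\Gamma(c-a-b-1)$; the exponent in the printed statement appears to be a typographical slip (this is also what the weaker convergence requirement $c>a+b-1$ for $F(a-1,b-1;c-1;1)$ would predict, whereas the hypothesis $c>a+b+1$ is stronger than necessary for the series in (c)). So do not be alarmed when your ``gamma-function identity'' does not produce the stated $\Gamma(c-a-b-1)$.
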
 
 
 \medskip
 Below we use the ideas used by \cite{bharanedhar,samy31} for the univalency of harmonic mappings involving the Gaussian hypergeometric functions.
 The first result in this section is given by
 
 \begin{theorem} \label{thm4.1} Let $a,b \in \mathbb{R}\setminus\{0\}$ and $c$ is a positive real number. Suppose that $\ f_1(z)=z+\overline{ z^2 F(a,b;c;z)}, f_2(z)=z+\overline{z \left( F(a,b;c;z)-1\right)}$ and $f_3(z)=z+\overline{z \int_{0}^{z} F(a,b;c;t)dt}$, then the following holds 
 	
 	\medskip
 	\noindent
 	(a) If $c>a+b+2$ and
 	\begin{align} \label{4.3}
 	\frac{(a)_2 (b)_2}{(c-a-b-2)_2}+ \frac{ab(\alpha+4)}{c-a-b-1}+2(1+\alpha) \leq \frac{\beta}{\Lambda}, 
 	\end{align}
 	then $f_1 \in \mathcal{B}_{\mathcal{H}}^0(\alpha, \beta). $
 	
 	\medskip
 	\noindent
 	(b) If $c>a+b+2$ and
 	\begin{equation} \label{4.4}
 	\frac{ab(ab+c-1)}{(c-a-b-2)_2}+\frac{ab(1+\alpha)}{c-a-b-1}+\alpha \leq \frac{\beta-\alpha}{\Lambda}, 
 	\end{equation}
 	then $f_2 \in \mathcal{B}_{\mathcal{H}}^0(\alpha, \beta). $
 	
 	\medskip
 	\noindent
 	(c)  If  $a \neq 1, b \neq 1$ and $c \neq 1$ with $ c>\mbox{max}\{0, a+b+1\}$ and
 	\begin{equation} \label{4.5}
 	\Lambda \left(\frac{ab}{c-a-b-1}+\frac{\alpha}{(a-1)(b-1)(c-a-b-1)}+\alpha \right)- \frac{\alpha(c-1)}{(a-1)(b-1)}\leq \beta,
 	\end{equation}
 	then $f_3 \in \mathcal{B}_{\mathcal{H}}^0(\alpha, \beta). $  
 \end{theorem}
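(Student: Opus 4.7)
The plan is to apply Theorem \ref{2.4} as the sufficient condition: for each $f_i$ it is enough to verify $\sum_{n=2}^{\infty} n(n+\alpha-1)(|a_n|+|b_n|)\le\beta$. In all three cases the analytic part is $h(z)=z$, so only the coefficients $b_n$ of the co-analytic part contribute, and the problem reduces case by case to reading off $b_n$ from the explicit power-series representation of $g$, reindexing so that the summation variable matches the one used in Lemma \ref{lm4.1}, and then evaluating a single hypergeometric sum in closed form and comparing with the stated inequality.

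For (a), $g(z)=z^2 F(a,b;c;z)$ gives $b_n=(a)_{n-2}(b)_{n-2}/[(c)_{n-2}(n-2)!]$ for $n\ge 2$; the reindex $k=n-2$ turns the test sum into $\sum_{k\ge 0}(k+2)(k+\alpha+1)\,(a)_k(b)_k/[(c)_k\,k!]$. I would decompose $(k+2)(k+\alpha+1)=(k+1)^2+(1+\alpha)(k+1)+\alpha$ and apply Lemma \ref{lm4.1}(b), Lemma \ref{lm4.1}(a) and the Gauss formula $F(a,b;c;1)=\Lambda$; collecting the three closed forms produces exactly the bracket appearing in (\ref{4.3}). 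For (b), $g(z)=z(F(a,b;c;z)-1)$ yields $b_n=(a)_{n-1}(b)_{n-1}/[(c)_{n-1}(n-1)!]$, and the reindex $k=n-1$ produces $\sum_{k\ge 1}(k+1)(k+\alpha)\,(a)_k(b)_k/[(c)_k\,k!]$. I would add and subtract the $k=0$ contribution (which equals $\alpha$) so that the sum runs over $k\ge 0$, decompose $(k+1)(k+\alpha)=(k+1)^2+(\alpha-1)(k+1)$, and apply the same two parts of Lemma \ref{lm4.1} to obtain (\ref{4.4}). For (c), $g(z)=z\int_0^z F(a,b;c;t)\,dt$ gives $b_n=(a)_{n-2}(b)_{n-2}/[(c)_{n-2}(n-1)!]$, so the test sum is $\sum_{k\ge 0}(k+2)(k+\alpha+1)\,(a)_k(b)_k/[(c)_k(k+1)!]$. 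Here the extra $(k+1)$ in the denominator cancels the $(k+1)^2$ and $(k+1)$ factors of the same decomposition, collapsing those two pieces to sums already handled by Lemma \ref{lm4.1}(a) and the Gauss formula; the residual $\alpha/(k+1)!$ tail is exactly the sum evaluated by Lemma \ref{lm4.1}(c), which is precisely why the additional restrictions $a,b,c\neq 1$ are needed only in (c).

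The main obstacle is algebraic bookkeeping: picking the polynomial decompositions of $(k+2)(k+\alpha+1)$ and $(k+1)(k+\alpha)$ so that every resulting sum is matched directly by an item of Lemma \ref{lm4.1}, together with the simplification $\Gamma(c-a-b-1)=\Gamma(c-a-b)/(c-a-b-1)$ that lets one re-express the Gamma-quotients uniformly in terms of $\Lambda$. A secondary subtlety is positivity: Theorem \ref{2.4} involves $|b_n|$ while Lemma \ref{lm4.1} evaluates the signed sums, so the identification is transparent when the Pochhammer ratios $(a)_k(b)_k/[(c)_k\,k!]$ have constant sign (e.g.\ $a,b,c>0$); for more general real parameters an elementary dominating-series comparison should be inserted to justify dropping the absolute values.
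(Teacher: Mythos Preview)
Your proposal is correct and follows essentially the same route as the paper: apply Theorem \ref{2.4}, read off the co-analytic coefficients, reindex, decompose $(k+2)(k+\alpha+1)=(k+1)^2+(1+\alpha)(k+1)+\alpha$, and invoke Lemma \ref{lm4.1}(a),(b),(c) together with the Gauss formula. The only cosmetic difference is in part (b), where the paper keeps the same three-term decomposition but shifts the Pochhammer index to $n+1$ (writing the sums over $(a)_{n+1}(b)_{n+1}/(c)_{n+1}$) instead of your two-term decomposition of $(k+1)(k+\alpha)$ with the $k=0$ correction; the algebra is equivalent. Your closing remark about the sign of $(a)_k(b)_k/(c)_k$ when $a$ or $b$ is negative is well taken and is in fact a subtlety the paper passes over silently.
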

 
 \begin{proof}
 	(a) Let $f_1(z)=z+ \overline{\sum_{n=2}^{\infty} C_nz^n},$ where $C_n=   \frac{(a)_{n-2} (b)_{n-2}}{(c)_{n-2}(n-2)!} \;\; (n\geq 2).$ Using Lemma \ref{lm4.1} and Gauss formula, we have      
 	\begin{align} \label{4.6}
 	\sum_{n=2}^{\infty}n(n+\alpha-1)|C_n|&=  \sum_{n=2}^{\infty} n(n+\alpha-1)\frac{(a)_{n-2} (b)_{n-2}}{(c)_{n-2}(n-2)!} \notag  \\
 	& = \sum_{n=0}^{\infty} (n+1)^2 \frac{(a)_n (b)_n}{(c)_nn! }  +(1+\alpha) \sum_{n=0}^{\infty} (n+1)\frac{(a)_n (b)_n}{(c)_nn! } \\
 	&+ \alpha \sum_{n=0}^{\infty} \frac{(a)_n (b)_n}{(c)_n \,n!}\notag \\
 	& =   \Lambda \left( \frac{(a)_2 (b)_2}{(c-a-b-2)_2}+ \frac{ab(\alpha+4)}{c-a-b-1}+2(1+\alpha) \right) . \notag 
 	\end{align}
 	Now if (\ref{4.3}) holds, then $\sum_{n=2}^{\infty}n(n+\alpha-1)|C_n|\leq \beta.$ Now using Theorem \ref{2.4}, we  conclude that $f_1 \in \mathcal{B}_{\mathcal{H}}^0(\alpha, \beta). $
 	
 	\medskip
 	\noindent  
 	(b) Let  $\ f_2(z)=z+ \overline{\sum_{n=2}^{\infty} D_nz^n},$ where $D_n =  \dfrac{(a)_{n-1} (b)_{n-1}}{(c)_{n-1}(n-1)!} \;\; (n\geq 2).
 	$ Using Lemma \ref{lm4.1} and Gauss formula, we have 
 	\begin{align} \label{4.7}
 	\sum_{n=2}^{\infty}n(n+\alpha-1)|D_n| &= \sum_{n=2}^{\infty} n(n+\alpha-1)\frac{(a)_{n-1} (b)_{n-1}}{(c)_{n-1}(n-1)!} \notag\\ 
 	& = \sum_{n=0}^{\infty}  (n+1) \frac{(a)_{n+1} (b)_{n+1}}{(c)_{n+1} \,n!} + (1+\alpha) \sum_{n=0}^{\infty}\frac{(a)_{n+1} (b)_{n+1}}{(c)_{n+1} \,n!} \notag \\
 	& \qquad +\alpha \sum_{n=0}^{\infty} \frac{(a)_{n+1} (b)_{n+1}}{(c)_{n+1}(n+1)!} \notag \\
 	&=\Lambda \left[\frac{ab(ab+c-1)}{(c-a-b-2)_2}+\frac{ab(1+\alpha)}{c-a-b-1}+\alpha \right] -\alpha. \notag
 	\end{align}   
 	Now if (\ref{4.4}) holds, then in view of Theorem \ref{2.4}, we have $f_2 \in \mathcal{B}_{\mathcal{H}}^0(\alpha, \beta).$
 	
 	\medskip
 	\noindent   
 	(c) Let $f_3(z)=z+ \overline{ \sum_{n=2}^{\infty}E_nz^n},$  where $E_n =  \dfrac{(a)_{n-2} (b)_{n-2}}{(c)_{n-2}(n-1)!} \;\; n\geq 2.$ Therefore in view of Lemma \ref{lm4.1} and Gauss formula, we have
 	\begin{align*}
 		\sum_{n=2}^{\infty} n(n+\alpha-1)|E_n| &= \sum_{n=2}^{\infty} n(n+\alpha-1)\frac{(a)_{n-2} (b)_{n-2}}{(c)_{n-2}(n-1)!} \\ 
 		&=\sum_{n=0}^{\infty} (n+1) \frac{(a)_{n} (b)_{n}}{(c)_{n} \,n!} +(1+\alpha)\sum_{n=0}^{\infty} \frac{(a)_{n} (b)_{n}}{(c)_{n} \,n!} \\
 		& +\alpha \sum_{n=0}^{\infty} \frac{(a)_{n} (b)_{n}}{(c)_{n} \,(n+1)!} \notag
 	\end{align*}
 	\begin{align*}
 		&= \frac{\Gamma(c) \Gamma(c-a-b-1)}{\Gamma(c-a) \Gamma(c-b)}(ab+c-a-b-1)+(1+\alpha)\frac{\Gamma(c) \Gamma(c-a-b)}{\Gamma(c-a) \Gamma(c-b)} \notag \\
 		& \qquad +\frac{\alpha}{(a-1)(b-1)}\left(\frac{\Gamma(c) \Gamma(c-a-b-1)}{\Gamma(c-a) \Gamma(c-b)} -(c-1) \right). 
 	\end{align*}
 	If (\ref{4.5}) holds, then by Theorem \ref{2.4}, we have  $f_3 \in \mathcal{B}_{\mathcal{H}}^0(\alpha, \beta). $                                   
 \end{proof}

 \medskip
 Note that for $ \eta \in\mathbb{C} \setminus \left\lbrace -1,-2, \cdots \right\rbrace $  and $n\in\mathbb{N} \cup \left\lbrace 0 \right\rbrace, $ we have 
 \[\frac{(-1)^n(-\eta)_n}{n!}=\binom{\eta }{n}=\frac{\Gamma 
 	(\eta+1)}{n!\Gamma (\eta-n+1)}.\] 
 In particular, when $\eta=m \left( m\in\mathbb{N}, m\geq n \right),$ we have
 \[ (-m)_n =\frac{(-1)^nm!}{(m-n)!}.\]
 Using this relation in Theorem \ref{thm4.1}, we can obtain harmonic univalent \linebreak polynomials that belong to the class $\mathcal{B}_{\mathcal{H}}^0(\alpha, \beta).$ \\
 
 \begin{corollary} Let $m\in \mathbb{N},c$ be a positive real numbers.  Let 
 	$$F_1(z)=z+\overline{\sum_{n=0}^{m}\binom{m}{n} \frac{(m-n+1)_n}{(c)_n}z^{n+2}}, \quad F_2(z)=z+\overline{\sum_{n=0}^{m}\binom{m}{n} \frac{(m-n+1)_n}{(c)_n}z^{n+1}}$$
 	and 
 	$$F_3(z)=z+\overline{  \sum_{n=0}^{m}\binom{m}{n}\frac{(m-n+1)_n}{(c)_n} \frac{z^{n+2}}{n+1}}. \qquad \qquad \qquad \qquad \qquad \qquad \qquad \qquad$$
 	Then the following holds.
 	
 	\medskip
 	\noindent
 	(a) If 
 	\[\frac{m^2(m-1)^2}{(c+2m-1)(c+2m-2)}+\frac{m^2(\alpha+4)}{c+2m-1}+2(1-\alpha) \leq \frac{ \beta \left[ \Gamma (c+m)\right]^2  }{\Gamma (c) \Gamma (c+2m)},\]
 	then $F_1 \in \mathcal{B}_{\mathcal{H}}^0(\alpha, \beta).$
 	
 	\medskip
 	\noindent
 	(b) If
 	\[\frac{m^2(c+m^2-1)}{(c+2m-2)(c+2m-1)}+\frac{m^2(1+\alpha)}{c+2m-1}+\alpha \leq \frac{(\beta-\alpha) \left[ \Gamma (c+m)\right]^2  }{\Gamma (c) \Gamma (c+2m)},\]
 	then $F_2 \in \mathcal{B}_{\mathcal{H}}^0(\alpha, \beta). $
 	
 	\medskip
 	\noindent
 	(b) If \[\frac{\Gamma(c)\Gamma(c+2m) }{\left( \Gamma(c+2m)\right]^2 }\left[ \frac{m^2}{c+2m-1}+\frac{\alpha}{(m+1)^2(c+2m-1)}+\alpha \right)-\frac{\alpha(c-1)}{(m+1)^2} \leq \beta, \]
 	then $F_3 \in \mathcal{B}_{\mathcal{H}}^0(\alpha, \beta).$  
 \end{corollary}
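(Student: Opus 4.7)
The plan is to derive this corollary as a direct specialization of Theorem \ref{thm4.1} to the parameter choice $a=b=-m$ with $m\in\mathbb{N}$. Under this choice, the Pochhammer symbol $(-m)_n$ vanishes for $n>m$, so the hypergeometric series $F(-m,-m;c;z)$ truncates to a polynomial of degree $m$, and the three hypergeometric mappings $f_1,f_2,f_3$ of Theorem \ref{thm4.1} reduce to the polynomials $F_1,F_2,F_3$ stated here.

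The first step would be to verify the coefficient matching. Using the identity $(-m)_n=(-1)^n m!/(m-n)!$ for $0\leq n\leq m$, I would compute
$$
\frac{(-m)_n(-m)_n}{n!}=\frac{(m!)^2}{n!((m-n)!)^2}=\binom{m}{n}\frac{m!}{(m-n)!}=\binom{m}{n}(m-n+1)_n,
$$
so that $F(-m,-m;c;z)=\sum_{n=0}^{m}\binom{m}{n}(m-n+1)_n z^n/(c)_n$. Plugging this into the definitions of $f_1(z)=z+\overline{z^2F(a,b;c;z)}$, $f_2(z)=z+\overline{z(F(a,b;c;z)-1)}$ and $f_3(z)=z+\overline{z\int_0^z F(a,b;c;t)\,dt}$ reproduces $F_1,F_2,F_3$ term by term after the appropriate shifts in $n$. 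The convergence hypotheses of Theorem \ref{thm4.1} are automatic under the specialization: $c>a+b+2$ becomes $c>2-2m$, trivially satisfied because $c>0$ and $m\geq 1$; and the side conditions $a\neq 1,\ b\neq 1$ in part (c) hold automatically (while $c\neq 1$ is tacitly inherited from Theorem \ref{thm4.1}(c)).

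The next step is to simplify the parameter-dependent quantities occurring in (\ref{4.3})--(\ref{4.5}). With $a=b=-m$ one has $ab=m^{2}$, $(a)_2(b)_2=m^{2}(m-1)^{2}$, $c-a-b-1=c+2m-1$, $(c-a-b-2)_2=(c+2m-2)(c+2m-1)$ and $(a-1)(b-1)=(m+1)^{2}$, while the Gauss constant reduces to
$$
\Lambda=\frac{\Gamma(c)\,\Gamma(c-a-b)}{\Gamma(c-a)\,\Gamma(c-b)}=\frac{\Gamma(c)\,\Gamma(c+2m)}{[\Gamma(c+m)]^{2}}.
$$
Substituting these expressions into inequalities (\ref{4.3}), (\ref{4.4}) and (\ref{4.5}) and dividing through by $\Lambda$ where appropriate yields precisely the three displayed sufficient conditions for $F_1,F_2,F_3$, and then Theorem \ref{thm4.1} delivers the desired memberships in $\mathcal{B}_{\mathcal{H}}^{0}(\alpha,\beta)$.

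The argument involves no genuine analytic difficulty beyond what Theorem \ref{thm4.1} already provides; the only real obstacle is careful bookkeeping. I would have to track the index shifts coming from the factors $z^2$, $z$ and $z\int_0^z$ outside the hypergeometric series consistently with the specialization $a=b=-m$, and verify that the ratios of Gamma functions in $\Lambda$ and in the terms of Lemma \ref{lm4.1} collapse correctly into the $(c)_n$/$\Gamma(c+m)$ expressions that appear in the corollary's statement.
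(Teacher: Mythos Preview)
Your proposal is correct and follows exactly the paper's own approach: the paper's proof consists of the single sentence ``The results follow, if we put $a=b=-m$ in Theorem~\ref{thm4.1},'' and you have simply spelled out the bookkeeping (the Pochhammer identities, the coefficient matching, and the simplification of $\Lambda$) that this substitution entails.
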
  
 
 \begin{proof}
 	The results follow, if we put $a=b=-m$ in Theorem 3.1.
 \end{proof}

 \medskip
 We conclude this paper by remarking that, by appropriately selecting parameters in Theorem 3.1 and Corollary 3.1, our results would lead to new results and further \linebreak applications. These consideration can fruitfully be worked out and we skip the details in this regards. 
 
 \medskip
 \section{Acknowledgements}
 The authors express their sincere thanks to the editor and referees for their valuable suggestions to improve the manuscript.

\end{document}